\documentclass{article}[12pt]
\usepackage{amsmath}
\usepackage{amsthm}
\usepackage{amsfonts}
\usepackage{amssymb}
\usepackage{amstext}
\usepackage{amsopn}
\usepackage{latexsym}
\setlength{\topmargin}{0pt}
\setlength{\oddsidemargin}{1.4cm} \setlength{\evensidemargin}{1.4cm}
\newtheorem{thm}{Theorem}

\newtheorem{cor}{Corollary}
\newtheorem{lemma}{Lemma}[section]

\begin{document}
\title{On Norms of Iterations of $\{0,1\}$-Matrices
\thanks{Supported by the NSFC (Nos. 11601161 and 11771153) and
the Fundamental Research Funds for the Central Universities: Zhongnan University of Economics and Law(31511911211).}}

\author{Chun WEI   \\
School of Statistics and Mathematics \\
Zhongnan University of Economics and Law \\ Wuhan 430073, China\\
E-mail: hbxt1986v@163.com\\ \\
Fan WEN \thanks{Corresponding author: wenfan@math.tsukuba.ac.jp}\\ College of Mathematics\\
  School of Science and Engineering
  \\University of Tsukuba
  \\ Ibaraki 305-8575, Japan\\
E-mail: wenfan@math.tsukuba.ac.jp}

\maketitle

\begin{abstract}
Let $M$ be a $b\times b$ nonzero $\{0,1\}$-matrix. Let $\rho(M)$ be its spectral radius and let $\|M^n\|$ be the norm of its $n$-th iteration. In the case $\rho(M)>1$, we see from the spectral radius formula that $\{\|M^n\|\}_{n=1}^\infty$
tends to $\infty$ exponentially as $n\to\infty$. In the case $\rho(M)=1$, $\{\|M^n\|\}_{n=1}^\infty$ can be bounded or tend to $\infty$ depending on $M$.
The fine behavior of this sequence is completely characterized in the present paper.

\medskip

\noindent{\bf Keywords}\,
Norms, Iterations of matrices, $M$-admissible words

\noindent{\bf 2010 MSC:}   28A80 (Primary), 30L10 (Secondary)
\end{abstract}

\section{Notations and Main Results}

Let $\mathbb{N}$ be the set of positive integers. Let $k,m\in \mathbb{N}$. Let
$$M=(M_{ij})=\left(
  \begin{array}{cccc}
    M_{11} & M_{12} & \cdots & M_{1m} \\
    M_{11} & M_{22} & \cdots & M_{2m}\\
     &  & \cdots &  \\
    M_{k1} & M_{k2} & \cdots & M_{km} \\
  \end{array}
\right)$$
be a $k\times m$ nonzero matrix. Denote by $r_i(M)$ its $i$-th row and by $c_j(M)$ its $j$-th column.
Define the norm of $M$ by
\begin{equation}\label{nm}
\|M\|=\sum_{i=1}^{k}\sum_{j=1}^{m} |M_{ij}|.
\end{equation}
If $M$ is a square matrix, we call $$\rho(M):=\max\{|\lambda|: \mbox{$\lambda$ is an eigenvalue of $M$}\}$$
the spectral radius of $M$. Brualdi and Hwang \cite{BH} studied the spectral radius of $\{0,1\}$-matrices with $1$'s in prescribed positions.

It is well known that
$$
\rho(M)=\lim_{n\to\infty}\|M^n\|^{1/n};
\mbox{ see Proposition 4.4.1 in \cite{S02}.}
$$
Therefore, if $\rho(M)\neq 1$, the sequence $\{\|M^n\|\}_{n=1}^\infty$
tends to $0$ or $\infty$ exponentially. In this note we study the behavior of this sequence for $\{0,1\}$-matrices $M$ with $\rho(M)=1$. We completely solve this question.

\medskip

Denote by $\bf{1}$ any matrix of all elements equal to $1$ and by $\bf{0}$ any zero matrix. Their patterns are self-evident when they appear. Let $b\in \mathbb{N},\, b\geq 2$. Let $A=\{1,2,\cdots, b\}$. Let $\mathcal{M}_b$ denote the family of $b\times b$ nonzero $\{0,1\}$-matrices. We say that a matrix $M$ in $\mathcal{M}_b$ satisfies the condition ${\bf P_1}$, if
$$
\mbox{ $r_i(M)\neq\bf{0}$ for any $i\in\{j\in A: c_j(M)\neq \textbf{0}\}$.}
$$
This condition is stronger than that  $M$ is non-nilpotent. Let
\begin{equation}\label{HM}
\mathcal{M}_b({\bf P_1})=\{M\in \mathcal{M}_b: M\mbox{ satisfies the condition ${\bf P_1}$}\}.
\end{equation}

\medskip

The main results of this paper are the following three theorems.

\begin{thm}\label{t1}
Let $M\in\mathcal{M}_b({\bf P_1})$. Then $\{\|M^n\|\}_{n=1}^\infty$ obeys the trichotomy:

(1) $\{\|M^n\|\}_{n=1}^\infty$ tends to $\infty$ exponentially.

(2)  $n+2\leq\|M^n\|\leq C_{n+b}^{n+1}$ for all $n\in\mathbb{N}$.

(3) $\|M^n\|\leq 2^{b-1}$ for all $n\in\mathbb{N}$.

\noindent Hereafter $C_k^m$ denotes the number of combinations.
\end{thm}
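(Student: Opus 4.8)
The plan is to recast the problem in terms of the digraph $G=G(M)$ on the vertex set $A$ having an arc $i\to j$ exactly when $M_{ij}=1$. Then $(M^{n})_{ij}$ counts the walks of length $n$ from $i$ to $j$, so $\|M^{n}\|$ is the total number of walks of length $n$ in $G$ (equivalently, the number of $M$-admissible words of length $n+1$). Condition ${\bf P_1}$ makes $M$ non-nilpotent, so $G$ carries a directed cycle and $\rho(M)\ge 1$. I would decompose $G$ into strongly connected components and work with the condensation DAG. A component $S$ that carries a cycle is either a single directed cycle, or contains a vertex of out-degree $\ge 2$ inside $S$; in the second case the principal submatrix $B=M|_{S}$ is irreducible with all row sums $\ge 1$ but not all equal to $1$, whence $\rho(M)\ge\rho(B)>1$ by Perron--Frobenius, and $\rho(M)=\lim_{n}\|M^{n}\|^{1/n}$ gives conclusion (1). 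So assume henceforth that every nontrivial component is a single cycle; then $\rho(M)=1$.

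Call a cycle of $G$ \emph{out-isolated} if no arc leaves it. The key structural fact, read off from ${\bf P_1}$, is: if a cycle $C$ has an arc leaving it then $C$ reaches some other cycle. Indeed, let $R$ be the set of vertices reachable from $C$ but not lying on $C$; then $R\ne\varnothing$, and every arc out of $R$ stays inside $R$ (an arc $u\to z$ with $u\in R$ and $z$ on $C$ would force $u$ into the component of $z$). If $R$ carried no cycle it would be a nonempty arc-closed acyclic subgraph, hence would possess a vertex of out-degree $0$ in $G$ which, being in $R$, has in-degree $\ge 1$; this contradicts ${\bf P_1}$. So $R$ carries a cycle, necessarily different from $C$. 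Consequently exactly one of the following holds: some cycle reaches another cycle (Case (2)), or every cycle is out-isolated (Case (3)).

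For the lower bound in Case (2): choose cycles $C_{1}\ne C_{2}$ and a directed path from $C_{1}$ to $C_{2}$, and let $x$ be the vertex of $C_{1}$ at which this path leaves $C_{1}$. Then $x$ carries both its arc along $C_{1}$ and the first arc of the path, so $\mathrm{outdeg}(x)\ge 2$; and since $x$ lies on the cycle $C_{1}$, for every $k\ge 0$ there is a walk of length $k$ ending at $x$ (start at the vertex of $C_{1}$ from which $k$ steps along $C_{1}$ reach $x$). Now $\|M^{k+1}\|=\sum_{W}\mathrm{outdeg}(\mathrm{end}(W))$, the sum being over walks $W$ of length $k$; by ${\bf P_1}$ every endpoint of a walk of length $\ge 1$ has out-degree $\ge 1$, and the walk ending at $x$ contributes a term $\ge 2$, so $\|M^{k+1}\|\ge\|M^{k}\|+1$ for all $k\ge 1$. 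Since $\|M^{1}\|=\|M\|\ge 3$ (an arc of $C_{1}$, the arc of the path leaving $C_{1}$, and an arc of $C_{2}$ are three distinct arcs), it follows that $\|M^{n}\|\ge 3+(n-1)=n+2$.

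For the upper bound $\|M^{n}\|\le C_{n+b}^{n+1}$ in Case (2) and the bound $\|M^{n}\|\le 2^{b-1}$ in Case (3), I would induct on $b$, peeling off a source component $S$ of the condensation and setting $M'=M|_{A\setminus S}$; one checks $M'$ still satisfies ${\bf P_1}$. A source is of one of three types: (a) a single vertex with a loop, (b) a single vertex without a loop, (c) a cycle of length $\ell\ge 2$. Splitting the walks of $G$ into those avoiding $S$ (which are exactly the walks of $G'$) and those starting in $S$ yields, for type (b), $\|M^{n}\|\le\|(M')^{n}\|+\|(M')^{n-1}\|$; for type (a), $\|M^{n}\|\le 1+\|(M')^{n}\|+\sum_{m=0}^{n-1}\|(M')^{m}\|$; and for type (c) the same as (a) with $1$ replaced by $\ell$ (plus, in Case (2), a bounded number of ``exit'' terms). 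Feeding in the inductive estimate $\|(M')^{m}\|\le C_{m+b-1}^{m+1}$ and collapsing via Pascal's rule together with the identity $\sum_{m=0}^{N}\binom{m+r}{r}=\binom{N+r+1}{r+1}$ converts the right-hand side into $C_{n+b}^{n+1}$. In Case (3) the structural fact forces the source in (a) or (c) to be an entire isolated component, so those recursions reduce to $\|M^{n}\|=|S|+\|(M')^{n}\|\le|S|+2^{b-|S|-1}\le 2^{b-1}$, while (b) gives $\|M^{n}\|\le 2\cdot 2^{b-2}=2^{b-1}$; the base case is a single isolated cycle, where $\|M^{n}\|=b\le 2^{b-1}$. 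I expect the delicate part to be this inductive bookkeeping: pinning down the recursive inequalities exactly, dealing with the degenerate sub-cases (when $M'$ is the zero matrix, or when $\|(M')^{0}\|=|A\setminus S|$ intervenes), and verifying that the telescoping gives $C_{n+b}^{n+1}$ for \emph{every} $n\ge 1$ and not only asymptotically. By contrast the structural trichotomy of the first two paragraphs is short and constitutes the conceptual core.
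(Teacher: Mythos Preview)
Your approach is essentially the paper's, translated into digraph language: the paper also separates off the exponential case by the condition $(M^k)_{ii}\ge 2$ (your ``some SCC is not a single cycle''), and then proves the upper bound $C_{n+b}^{n+1}$ by induction on $b$, peeling off a source cycle (their $D_M^{00}$, found via Lemmas~\ref{ijf} and~\ref{iijf}) and invoking the identity~(\ref{id1}) and inequality~(\ref{cd1}). Two remarks on the bookkeeping you flag as delicate. First, the induction for the bound $\|M^n\|\le C_{n+b}^{n+1}$ must be run over \emph{all} matrices with $\rho(M)=1$ (your Cases~(2) and~(3) together, the paper's class $\mathcal{M}_b({\bf P_1,P_2})$), not just Case~(2): peeling a source cycle from a Case~(2) matrix can leave a Case~(3) remainder $M'$, and you need $\|(M')^m\|\le C_{m+b'}^{m+1}$ there too. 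Second, for a source cycle of length $\ell\ge 2$ (your type~(c)) the recursion is not ``type~(a) with $1$ replaced by $\ell$'' but rather $\|M^n\|\le \|(M')^n\|+\ell\bigl(1+(b-\ell)+\sum_{m=1}^{n-1}\|(M')^m\|\bigr)$, with the factor $\ell$ multiplying the whole sum; this collapses via~(\ref{id1}) to $\ell\,C_{n+b-\ell}^{\,n}+C_{n+b-\ell}^{\,n+1}$, and one then needs~(\ref{cd1}) (not just Pascal) to reach $C_{n+b}^{n+1}$. The paper handles Case~(3) by a single block decomposition $\left(\begin{smallmatrix}U&\mathbf{0}\\V&B\end{smallmatrix}\right)$ with $U$ a permutation matrix on all of $D_M$ and $B$ strictly lower triangular, rather than by your component-by-component peel; either works.
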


Let $k\in\mathbb{N}$. Let $L_k=(L_{ij})$ and $T_k=(T_{ij})$ be two $k\times k$ matrices defined respectively by
$$
L_{ij}=\left\{
         \begin{array}{ll}
           1 & \hbox{if $i>j$} \\
           0 & \hbox{otherwise}
          \end{array}
       \right.
$$
and
$$
T_{ij}=\left\{
         \begin{array}{ll}
           1 & \hbox{if $i\geq j$} \\
           0 & \hbox{otherwise.}
          \end{array}
       \right.
$$
Then $L_k^k={\bf 0}$ by a simple computation. Let $I_k$  be the $k\times k$ unit matrix. For $k\geq 2$ denote by $J_k=(J_{ij})$ the $k\times k$ matrix whose elements are all zero except
$$J_{12}=J_{23}=\cdots=J_{(k-1)k}=J_{k1}=1.$$
It is easy to see that $J_k^k=I_k$.

\medskip

Let $M, N\in\mathcal{M}_k$. We say that $M$ and $N$ are equivalent, written as $M\sim N$, if there exist $(i_1,j_1), (i_2,j_2), \cdots, (i_l,j_l)\in\{1,2,\cdots,k\}\times\{1,2,\cdots,k\}$ such that
$$M=P(i_l,j_l)\cdots P(i_2,j_2)P(i_1,j_1)NP(i_1,j_1)P(i_2,j_2)\cdots P(i_l,j_l),$$ where $P(i,j)$ is the matrix obtained by interchanging the $i$-th and $j$-th rows of the unit matrix $I_k$.
By the definition, we have the implication
\begin{equation}\label{im1}
\mbox{$M\sim N\Rightarrow\|M^n\|=\|N^n\|$ for all $n\in\mathbb{N}$.}
\end{equation}
Note also that $I_2\not\sim J_2$. In fact, we have $P(1,2)J_2P(1,2)=J_2$ as is easily seen.

\begin{thm}\label{t3}
Let $M\in\mathcal{M}_b({\bf P_1})$. Then $\sup_{n\in\mathbb{N}}\|M^n\|=2^{b-1}$ if and only if
$M$ is equivalent with one of the following three matrices
\begin{equation}\label{m0}
\left(
      \begin{array}{cc}
        1 & \bf{0} \\
        \bf{1} & L_{b-1}  \\
      \end{array}
    \right),\
\left(
      \begin{array}{cc}
      I_2 & \bf{0} \\
      \bf{1} & L_{b-2} \\
    \end{array}
    \right),\
\left(
    \begin{array}{cc}
      J_2 & \bf{0} \\
      \bf{1} & L_{b-2} \\
    \end{array}
  \right).
\end{equation}
In the case $b=2$, these three matrices are $\left(
      \begin{array}{cc}
        1 & 0 \\
        1 & 0  \\
      \end{array}
    \right),\, I_2$, $J_2$, respectively. \end{thm}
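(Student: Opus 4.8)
The plan is to read $\|M^n\|=\mathbf 1^{\top}M^{n}\mathbf 1$ as the number of walks of length $n$ in the digraph $G_M$ on vertex set $A$ carrying an arc $i\to j$ exactly when $M_{ij}=1$, and to exploit the cycle structure of $G_M$. Since $\sup_n\|M^n\|<\infty$, $M$ falls under alternative (3) of Theorem~\ref{t1}, so already $\|M^n\|\le 2^{b-1}$ for every $n$; it remains to decide when the supremum attains that value. For the ``if'' direction I would just count walks in the three displayed digraphs: in each case, for $n$ large, a walk of length $n$ is a strictly descending chain through the transient vertices followed by entry into the bottom $1$- or $2$-cycle (from which the walk is forced forever), and counting these chains gives $\|M^n\|=2^{b-1}$ for all large $n$ and $\le 2^{b-1}$ always. (For the first matrix the descending chains are exactly the subsets of $A$ containing the loop-vertex, giving $2^{b-1}$ at once; for the other two one gets $2+2(2^{b-2}-1)=2^{b-1}$.) The stated shapes for $b=2$ in \eqref{m0} are then immediate.

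For the ``only if'' direction, assume $M\in\mathcal M_b({\bf P_1})$ with $\sup_n\|M^n\|=2^{b-1}$. First I would eliminate isolated vertices of $G_M$: deleting one produces a nonzero matrix in $\mathcal M_{b-1}({\bf P_1})$ with the same iteration norms for $n\ge 1$, whence Theorem~\ref{t1} would force the supremum to be $\le 2^{b-2}$, a contradiction (the case $b=2$ being checked directly). Hence, using ${\bf P_1}$, every vertex of $G_M$ has an out-arc. Combined with the structure of alternative (3) --- every strongly connected component is a single vertex or a simple directed cycle, and no walk meets two distinct cycles --- this forces that a walk which enters a cycle never leaves it, so every vertex on a cycle has out-degree exactly $1$; the remaining ``transient'' vertices induce a directed acyclic graph and each of them reaches a cycle. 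It follows that the vectors $M^{n}\mathbf 1$ are coordinatewise nondecreasing and eventually constant, so $\|M^n\|\uparrow\sum_{i\in A}g(i)$, where $g(c)=1$ for each cycle-vertex $c$ and $g(i)=\sum_{i\to j}g(j)$ for transient $i$ computed in topological order; therefore $\sum_{i\in A}g(i)=2^{b-1}$.

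Now I would optimise that sum. Let $L$ be the number of cycle-vertices, $t=b-L$, and list the transient vertices as $i_1,\dots,i_t$ so that every arc points to a smaller index. Since $i_k$ can point to at most $i_1,\dots,i_{k-1}$ among transient vertices and to at most all $L$ cycle-vertices, $g(i_k)\le g(i_1)+\dots+g(i_{k-1})+L$, and an easy induction gives $\sum_{i\in A}g(i)\le 2^{t}L=2^{\,b-L}L\le 2^{b-1}$, the last inequality being strict unless $L\in\{1,2\}$. Hence equality forces $L\in\{1,2\}$ together with tightness in each bound $g(i_k)\le g(i_1)+\dots+g(i_{k-1})+L$, which (as each $g(i_j)\ge1$) means the transient part is a complete strict linear order and every transient vertex points to every cycle-vertex. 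When $L=1$ the unique cycle is a self-loop; when $L=2$ the two cycle-vertices either carry two self-loops or form a $2$-cycle. These are exactly three isomorphism types of digraph, and since $M\sim N$ holds precisely when $G_M\cong G_N$ (the relation $\sim$ being generated by simultaneous row/column transpositions, i.e.\ conjugation by permutation matrices), $M$ is equivalent to one of the three matrices in \eqref{m0}.

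The delicate step is the structural one in the second paragraph: proving carefully that in alternative (3), under ${\bf P_1}$, a walk reaching a cycle is trapped there and every cycle-vertex has out-degree $1$. This rests on the classification of irreducible nonnegative integer matrices of spectral radius $1$ as permutation matrices (so a strongly connected component of $G_M$ possessing a chord would have $\rho>1$), combined with the ``at most one cycle per walk'' property that underlies alternative (3). The remaining care-points --- that $M^{n}\mathbf 1$ is nondecreasing and eventually constant, the reduction removing isolated vertices, the $b=2$ base case, and the bookkeeping showing that simultaneous tightness pins $G_M$ down to those three isomorphism types --- are routine but should be recorded in order.
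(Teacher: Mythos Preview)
Your proposal is correct and follows essentially the same strategy as the paper, recast in digraph language. The paper's proof is very short because it simply invokes the proof of Lemma~\ref{boun}: there it is shown that any bounded $M\in\mathcal M_b({\bf P_1})$ is equivalent to a block matrix $\left(\begin{smallmatrix}U&\mathbf 0\\V&B\end{smallmatrix}\right)$ with $U\in\mathcal U_s$ and $B$ strictly lower triangular, and that
\[
\|M^n\|\le\Bigl\|\left(\begin{smallmatrix}U&\mathbf 0\\\mathbf 1&L_{b-s}\end{smallmatrix}\right)^{n}\Bigr\|=s\,2^{\,b-s}\le 2^{\,b-1}.
\]
Equality forces $s\in\{1,2\}$, $V=\mathbf 1$, $B=L_{b-s}$, which is exactly your conclusion that $L\in\{1,2\}$, every transient vertex points to every cycle vertex, and the transient part is the full strict lower order. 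Your function $g(i)=\lim_n (M^n\mathbf 1)_i$ is the graph-theoretic incarnation of the row sums in the paper's block computation, and your bound $\sum_i g(i)\le L\,2^{\,b-L}$ is the same inequality $s\,2^{\,b-s}$.

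The main difference is packaging: you re-derive the structural facts (cycle vertices have out-degree $1$, transient vertices form a DAG feeding into cycles) from scratch via walk-counting and the spectral characterisation of irreducible $\{0,1\}$-matrices with $\rho=1$, whereas the paper has already isolated these facts in Lemmas~\ref{if}, \ref{iif} and the proof of Lemma~\ref{boun} using the condition~${\bf P_2}$. Your preliminary elimination of isolated vertices via Theorem~\ref{t1} at size $b-1$ is a nice touch the paper does not make explicit (there it falls out of the equality $V=\mathbf 1$, $B=L_{b-s}$). Both routes arrive at the same three matrices for the same reason.
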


\begin{thm}\label{t2}
Let $M\in\mathcal{M}_b({\bf P_1})$.
Then $\|M^n\|=C_{n+b}^{n+1}$ for all $n\in\mathbb{N}$ if and only if $M$ is equivalent with $T_b$.
\end{thm}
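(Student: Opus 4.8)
The plan is to prove the two implications separately. For the direction $M\sim T_b\Rightarrow\|M^n\|=C_{n+b}^{n+1}$, by \eqref{im1} it suffices to compute $\|T_b^n\|$. The matrix $T_b$ is the adjacency matrix of the digraph $G$ on $A=\{1,\dots,b\}$ in which there is an arc $i\to j$ exactly when $i\ge j$, so $(T_b^n)_{ij}$ is the number of walks $v_0\to v_1\to\cdots\to v_n$ with $v_0=i$ and $v_n=j$; and such a walk is nothing but a weakly decreasing sequence $v_0\ge v_1\ge\cdots\ge v_n$ in $A$. Hence $\|T_b^n\|$ equals the number of weakly decreasing sequences of length $n+1$ over a $b$-element set, equivalently the number of size-$(n+1)$ multisets from $b$ symbols, which is $C_{n+b}^{n+1}$. (Alternatively: writing $T_b=I_b+L_b$, the matrices $I_b$ and $L_b$ commute and $L_b^b=\mathbf 0$, so $T_b^n=\sum_{j\ge 0}C_n^{j}L_b^j$; since $\|L_b^j\|$ is the number of strictly decreasing length-$(j+1)$ sequences in $A$, namely $C_b^{j+1}$, a Vandermonde summation gives $\|T_b^n\|=\sum_{j\ge0}C_n^{j}C_b^{j+1}=C_{n+b}^{n+1}$.)

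For the converse, assume $\|M^n\|=C_{n+b}^{n+1}$ for every $n$. Since $C_{n+b}^{n+1}=C_{n+b}^{b-1}$ is a nonconstant polynomial in $n$ of degree $b-1$, we have $\|M^n\|\to\infty$ while $\|M^n\|^{1/n}\to 1$, hence $\rho(M)=1$ by the spectral radius formula (so $M$ falls under case (2) of Theorem~\ref{t1}). Taking $n=1$ gives $\|M\|=C_{b+1}^{2}=\frac{b(b+1)}{2}$, i.e. $M$ has exactly $\frac{b(b+1)}{2}$ ones. It therefore suffices to prove the following extremal fact: \emph{if $N\in\mathcal{M}_b$ and $\rho(N)=1$, then $\|N\|\le\frac{b(b+1)}{2}$, with equality if and only if $N\sim T_b$}. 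Applying this to $M$, which attains the bound, gives $M\sim T_b$, as desired.

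To prove this extremal fact I would decompose the digraph $G_N$ with adjacency matrix $N$ into its strongly connected components. Since $\rho(N)=1$, each component $S$ satisfies $\rho(N_S)\le 1$; and a strongly connected $\{0,1\}$-digraph has spectral radius $1$ precisely when it is a single directed cycle (a loop being the cycle of length $1$), because any other strongly connected digraph contains a vertex of out-degree $\ge 2$ and then, composing two distinct return walks in all orders, has exponentially many closed walks, forcing $\rho>1$. Hence each component is a loop-free single vertex, a single looped vertex, or a simple cycle of length $\ge 2$; in particular loops occur only at singleton components. Replacing a cycle-component of length $k\ge 2$ by a chain of $k$ looped vertices together with all $C_k^{2}$ arcs among them strictly increases $\|N\|$ and creates no new cycle, so in the extremal configuration no component is a cycle of length $\ge 2$; thus every vertex carries a loop and the arcs of $G_N$ other than the loops form a directed acyclic graph. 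A loopless DAG on $b$ vertices has at most $C_b^{2}$ arcs — at most one per pair, all oriented consistently with a topological order — with equality exactly for the transitive tournament; adding back the $b$ loops yields $\|N\|\le\frac{b(b+1)}{2}$ with equality iff $G_N$ is isomorphic to $G_{T_b}$, i.e. $N\sim T_b$.

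The computation of $\|T_b^n\|$ and the reduction of the converse to the extremal fact are routine. The main obstacle is the extremal fact itself: the classification of strongly connected $\{0,1\}$-matrices of spectral radius $1$, and the case analysis in the edge-count maximization over all possible component structures. I expect a good deal of this to overlap with the argument establishing the upper bound $\|M^n\|\le C_{n+b}^{n+1}$ in Theorem~\ref{t1}, where the extremal role of $T_b$ already appears; one could instead extract the equality case directly from that proof.
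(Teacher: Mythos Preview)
Your proof is correct, but the converse direction follows a genuinely different route from the paper's.

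For $2)\Rightarrow 1)$ both arguments amount to computing $\|T_b^n\|$; your multiset count is a cleaner variant of the paper's Lemma~\ref{id35}.

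For $1)\Rightarrow 2)$ the paper does \emph{not} use only $n=1$. It tracks the equality case through the inductive proof of Lemma~\ref{unb}: passing to the block form $\left(\begin{smallmatrix}U&\mathbf 0\\V&B\end{smallmatrix}\right)$ coming from Lemma~\ref{iijf}, the chain of inequalities there, together with \eqref{cd1}, forces $k=1$, $V=\mathbf 1$, $B=\widehat B$ and $\|B^n\|=C_{n+b-1}^{n+1}$ for all $n$; one then recurses on $B$. Your argument instead extracts $\rho(M)=1$ from the polynomial growth and then proves a free-standing extremal statement: among $\{0,1\}$-matrices with spectral radius $1$, the number of ones is at most $\binom{b+1}{2}$, with equality precisely for matrices equivalent to $T_b$. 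This is more elementary (no need for the $D_M^{0}$, $D_M^{00}$ machinery) and in fact yields a sharper conclusion, since only the single datum $\|M\|=\binom{b+1}{2}$ together with $\rho(M)=1$ is used. The paper's approach, on the other hand, falls out for free once Lemma~\ref{unb} is in place, so it is the natural choice in context.

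One small expository gap: from ``no component is a cycle of length $\ge 2$'' you jump to ``thus every vertex carries a loop''. You still need the trivial observation that a loop-free singleton component can be given a loop without creating any longer cycle, so in the extremal configuration no such singleton occurs. With that sentence added, the SCC argument is complete.
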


The above results can be directly applied to $M$-admissible words.
Let $b\in\mathbb{N},\, b\geq 2$. Let $A=\{1,2,\cdots, b\}$.
For every $n\in\mathbb{N}$ denote by $A^n$ all words of length $n$, by $A^*$ all finite words, and by $A^{\mathbb{N}}$ all infinite words over $A$. For a word $w$ denote by $w_k$ its $k$-th term and by $|w|$ its length. For $u\in A^*$ denote by $u^\infty$ the periodic infinite word of period $u$ and by $uw$ the concatenation.
In symbol dynamic system \cite{W}, $A^{\mathbb{N}}$ is an important metric space equipped with the metric
$$
d(u,v)=\left\{
         \begin{array}{ll}
           0 & \hbox{if $u=v$} \\
           b^{-k(u,v)} & \hbox{if $u\neq v$,}
          \end{array}
       \right.
$$ where $$k(u,v)=\min\{k\in\mathbb{N}: u_{k}\neq v_{k}\},\,\mbox{ if }u, v\in A^{\mathbb{N}},\, u\neq v.$$

Let $M\in\mathcal{M}_b$. Let $n\in \mathbb{N}$, $n\geq 2$. Let
$$
A^{n}_M:=\{w\in A^n:
M_{w_kw_{k+1}}=1 \mbox{ for all } 1\leq k<n\},
$$
$$A^*_M:=\bigcup_{n=2}^\infty A^{n}_M,$$ and
$$
A^{\mathbb{N}}_M:=\{w\in A^{\mathbb{N}}:
M_{w_kw_{k+1}}=1 \mbox{ for all } k\in\mathbb{N}\}.
$$
The words in these three sets are called $M$-admissible. Since
\begin{equation}\label{ys}
(M^{n})_{ij}=\sum_{w\in A^{n-1}}M_{iw_1}M_{w_1w_2}\cdots M_{w_{n-1}w_{n-1}} M_{w_{n-1}j}=\sharp A^{n+1}_M(i,j)
\end{equation}
for any $i, j\in A$, one has for each $n\in\mathbb{N}$
\begin{equation}\label{xdd}
\|M^{n}\|=\sharp A^{n+1}_M.
\end{equation}
Hereafter $(M^{n})_{ij}$ denotes the $(i,j)$-element of $M^{n}$, $A^{n}_M(i,j)$ denotes all $M$-admissible words of length $n$ of head $i$ and tail $j$, and $A^*_M(i,j)$ denotes all $M$-admissible finite words of head $i$ and tail $j$. Also, we know from \cite{BP}  that
\begin{equation}\label{dim}
\dim_HA^\mathbb{N}_M=\dim_BA^\mathbb{N}_M=\frac{\log\rho(M)}{\log b}
\end{equation}
where $\dim_H$ and $\dim_B$ denote the Hausdorff and box dimensions respectively. Using (\ref{xdd}), (\ref{dim}), Theorems \ref{t1} and \ref{t3}, we have the following results.

\begin{cor}\label{t4}
Let $M\in\mathcal{M}_b({\bf P_1})$. Then $A^{\mathbb{N}}_M$ obeys a trichotomy as follows.

(1) $\dim_HA^\mathbb{N}_M>0$.

(2) $A^\mathbb{N}_M$ is countably infinite.

(3) $\sharp A^{\mathbb{N}}_M\leq 2^{b-1}$. Hereafter $\sharp$ denotes the cardinality.

\end{cor}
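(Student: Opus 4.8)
The plan is to transfer each clause of Theorem~\ref{t1} to the corresponding clause about $A^\mathbb{N}_M$, using the dictionary (\ref{xdd}) between $\|M^n\|$ and $\sharp A^{n+1}_M$, the dimension formula (\ref{dim}), and the extendability of $M$-admissible words forced by ${\bf P_1}$. Clause (1) is immediate: if $\{\|M^n\|\}_{n=1}^\infty$ grows exponentially, then $\rho(M)=\lim_n\|M^n\|^{1/n}>1$, and (\ref{dim}) gives $\dim_HA^\mathbb{N}_M=\log\rho(M)/\log b>0$.

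For clause (2), assume $n+2\le\|M^n\|\le C_{n+b}^{n+1}$ for all $n$, and first I would show that $A^\mathbb{N}_M$ is infinite. By (\ref{xdd}), $\sharp A^m_M=\|M^{m-1}\|\ge m+1\to\infty$, so there are infinitely many finite $M$-admissible words. Now ${\bf P_1}$ makes every finite $M$-admissible word extendable: if $u=u_1\cdots u_m\in A^m_M$ then $M_{u_{m-1}u_m}=1$, so $c_{u_m}(M)\neq{\bf 0}$, so $r_{u_m}(M)\neq{\bf 0}$, and $u$ prolongs to a word in $A^{m+1}_M$; iterating this prolongation, every finite $M$-admissible word is a prefix of some element of $A^\mathbb{N}_M$. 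Were $A^\mathbb{N}_M$ finite with $N$ elements, then every word in $A^m_M$ would be a length-$m$ prefix of one of those $N$ infinite words, forcing $\sharp A^m_M\le N$ for all $m\ge 2$, contradicting $\sharp A^m_M\to\infty$. Hence $A^\mathbb{N}_M$ is infinite.

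Next I would show $A^\mathbb{N}_M$ is countable in clause (2). Since $C_{n+b}^{n+1}=C_{n+b}^{b-1}$ is polynomial in $n$, the spectral radius formula gives $\rho(M)=1$. Identify $A^\mathbb{N}_M$ with the set of infinite directed paths in the digraph $G$ on vertex set $A$ having an arc $i\to j$ exactly when $M_{ij}=1$. The adjacency matrix of a strongly connected digraph with at least one arc has spectral radius $\ge 1$, with equality precisely when the digraph is a single simple cycle; hence $\rho(M)=1$ forces every strongly connected component of $G$ to be a simple directed cycle or a single loopless vertex. A path never re-enters a component once it leaves it, and there are finitely many components, so an infinite path eventually stays inside a single component; that component cannot be a loopless vertex, so the path eventually runs around a fixed simple cycle and is therefore eventually periodic. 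Since eventually periodic words over the finite alphabet $A$ form a countable set, $A^\mathbb{N}_M$ is countable, hence (with the previous step) countably infinite. For clause (3), assume $\|M^n\|\le 2^{b-1}$ for all $n$; then $\sharp A^m_M\le 2^{b-1}$ for all $m\ge 2$ by (\ref{xdd}), and if $A^\mathbb{N}_M$ had $2^{b-1}+1$ distinct members, then choosing $m$ large enough that every pair among them already differs within the first $m$ coordinates would yield $2^{b-1}+1$ distinct length-$m$ prefixes in $A^m_M$, a contradiction; so $\sharp A^\mathbb{N}_M\le 2^{b-1}$. Finally, Theorem~\ref{t1} is a genuine trichotomy and its three cases imply clauses (1)--(3), which are mutually exclusive (a countable set has Hausdorff dimension $0$, and a set of at most $2^{b-1}$ points is not infinite); this gives the corollary.

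The step I expect to be the main obstacle is the countability in clause (2). Unlike clauses (1) and (3), it is not a formal consequence of (\ref{xdd}), (\ref{dim}) and Theorem~\ref{t1} — a zero-dimensional subshift can be uncountable — so one really has to exploit the finite-type structure: reduce to paths in the finite graph $G$ and use $\rho(M)=1$ to collapse every strongly connected component to a simple cycle or an isolated vertex. Alternatively, the proof of Theorem~\ref{t1}(2) presumably yields a block normal form for $M$ from which eventual periodicity of all $M$-admissible sequences can be read off directly.
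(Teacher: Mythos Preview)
Your argument is correct. The paper does not spell out a proof of this corollary---it merely says it follows from (\ref{xdd}), (\ref{dim}), and Theorem~\ref{t1}---so you have filled in exactly the details a reader would need, and in the same overall shape: transfer each case of Theorem~\ref{t1} to $A^{\mathbb N}_M$ via (\ref{xdd}) and (\ref{dim}), and check mutual exclusivity.

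The one place you deviate from the paper's machinery is the countability step in clause~(2). The paper already has this in hand: in case~(2) one has $M\in\mathcal M_b({\bf P_1,P_2})$ by Lemma~\ref{nP}, and Lemma~\ref{if}(3) then says outright that every word of $A_M^{\mathbb N}$ is ultimately periodic, hence $A_M^{\mathbb N}$ is countable. Your strongly-connected-component argument is a genuine alternative: from $\rho(M)=1$ you deduce (via a Perron--Frobenius style count of closed walks) that each nontrivial SCC of the transition graph is a single directed cycle, and then read off eventual periodicity from the condensation DAG. This is essentially an independent proof of Lemma~\ref{if}(3) that trades the paper's combinatorics on $D_M$ for standard digraph/spectral facts; it is a bit more structural and does not need the preparatory Lemmas~\ref{if}--\ref{iijf}, while the paper's route is shorter once those lemmas are in place. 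Either way the conclusion is the same, and your closing remark about a block normal form is exactly what Lemma~\ref{if} and the proof of Lemma~\ref{unb} provide.
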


\begin{cor}\label{t5}
Let $M\in\mathcal{M}_b({\bf P_1})$. Then $\sharp A^{\mathbb{N}}_M=2^{b-1}$ if and only if $M$ is equivalent with one of the three matrices in (\ref{m0}).
\end{cor}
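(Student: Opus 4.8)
The plan is to reduce Corollary \ref{t5} to Theorem \ref{t3} through the identity (\ref{xdd}), which reads $\|M^n\|=\sharp A^{n+1}_M$. The bridge I would prove is the following lemma: \emph{for every $M\in\mathcal{M}_b({\bf P_1})$ the set $A^{\mathbb{N}}_M$ is finite if and only if $\sup_{n\in\mathbb{N}}\|M^n\|<\infty$, and in that case $\sharp A^{\mathbb{N}}_M=\sup_{n\in\mathbb{N}}\|M^n\|$.}

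Granting the lemma, the corollary follows at once. If $M$ is equivalent with one of the three matrices in (\ref{m0}), then $\sup_n\|M^n\|=2^{b-1}$ by the ``if'' direction of Theorem \ref{t3}; this being finite, the lemma gives $\sharp A^{\mathbb{N}}_M=2^{b-1}$. Conversely, if $\sharp A^{\mathbb{N}}_M=2^{b-1}$, then $A^{\mathbb{N}}_M$ is finite, so the lemma gives $\sup_n\|M^n\|=\sharp A^{\mathbb{N}}_M=2^{b-1}$, and the ``only if'' direction of Theorem \ref{t3} forces $M$ to be equivalent with one of the three matrices in (\ref{m0}).

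It remains to prove the lemma, which is where condition ${\bf P_1}$ is used decisively. I would first record that under ${\bf P_1}$ every finite $M$-admissible word extends to the right: for $w=w_1\cdots w_n\in A^n_M$, the entry $M_{w_{n-1}w_n}=1$ gives $c_{w_n}(M)\neq{\bf 0}$, hence $r_{w_n}(M)\neq{\bf 0}$ by ${\bf P_1}$, so $M_{w_na}=1$ for some $a\in A$ and $w_1\cdots w_na\in A^{n+1}_M$. This has two consequences. First, each truncation map $A^{n+1}_M\to A^{n}_M$ is onto, so by (\ref{xdd}) the sequence $\|M^n\|=\sharp A^{n+1}_M$ is nondecreasing; and by König's lemma applied to the finitely branching infinite tree of right-extensions of a fixed $w\in A^{n}_M$, every element of $A^{n}_M$ is a length-$n$ prefix of some element of $A^{\mathbb{N}}_M$, whence the prefix map $A^{\mathbb{N}}_M\to A^{n+1}_M$ is onto and $\sharp A^{\mathbb{N}}_M\ge\|M^n\|$ for all $n$ --- which yields the ``only if'' half. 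Second, if $L:=\sup_n\|M^n\|<\infty$, then the nondecreasing integer sequence $\{\|M^n\|\}$ is eventually constantly $L$, so for all large $n$ the onto map $A^{n+1}_M\to A^{n}_M$ is a bijection between finite sets of equal cardinality; hence each sufficiently long admissible word has a unique one-letter admissible extension, and therefore extends uniquely to an infinite admissible word. The assignment ``length-$N$ prefix $\mapsto$ its unique infinite extension'' is then readily checked to be a bijection between $A^{N}_M$ and $A^{\mathbb{N}}_M$ for $N$ past the stabilization point, giving $\sharp A^{\mathbb{N}}_M=\sharp A^{N}_M=L$ and completing the lemma.

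The main obstacle is the lemma itself, and inside it the role of ${\bf P_1}$: without that hypothesis the truncation maps may fail to be surjective, so $\|M^n\|=\sharp A^{n+1}_M$ need be neither monotone nor a count of the \emph{right-extendable} admissible words, and the clean identity $\sharp A^{\mathbb{N}}_M=\sup_n\|M^n\|$ breaks down. (In place of the König-lemma step one could instead invoke the trichotomy of Corollary \ref{t4} to see that a finite $A^{\mathbb{N}}_M$ already forces the bounded regime of Theorem \ref{t1}; but it is the monotonicity-and-stabilization argument that pins down the exact value $2^{b-1}$.) Everything beyond the lemma is routine bookkeeping on top of (\ref{xdd}), (\ref{im1}), Theorem \ref{t3} and Corollary \ref{t4}.
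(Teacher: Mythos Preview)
Your proposal is correct and follows the route the paper itself indicates, namely reducing Corollary~\ref{t5} to Theorem~\ref{t3} through the identity~(\ref{xdd}). The paper offers no detailed proof beyond citing (\ref{xdd}), (\ref{dim}), and Theorems~\ref{t1}--\ref{t3}, but your bridging lemma --- that under ${\bf P_1}$ one has $\sharp A^{\mathbb{N}}_M=\sup_n\|M^n\|$ whenever either side is finite --- is exactly what the paper's scattered observations in Lemmas~\ref{nP} and~\ref{iif} (monotonicity of $\sharp A^n_M$, eventual stabilization, unique extension) amount to when assembled.
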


\section{Preliminaries}

\subsection{Two known combinatoric formulas}

\begin{equation}\label{id1}
C_{1+b}^1+C_{1+b}^{2}+C_{2+b}^{3}+\cdots+C_{n-1+b}^{n}=C_{n+b}^{n},\,\mbox{ where } b, n\in\mathbb{N}.
\end{equation}
\begin{equation}\label{cd1}
kC_{n+b-k}^n+C_{n+b-k}^{n+1}\leq C_{n+b}^{n+1},\,\mbox{ where } k, b, n\in\mathbb{N},\, 1\leq k< b.
\end{equation}
The equality in (\ref{cd1}) holds if and only if $k=1$.

\subsection{Some basic facts on norms of matrices}

Let $k,s\in\mathbb{N}$. Let ${\bf 1}, {\bf 0}, I_k, L_k, T_k, J_k$ be matrices defined as in Section 1.

\begin{lemma}\label{2.3}
Let $b\in\mathbb{N},\, b\geq 2$. Then
\begin{equation}\label{slt}
b+\|L_{b-1}^1\|+\|L_{b-1}^2\|+\cdots+\|L_{b-1}^{b-2}\|=2^{b-1}.
\end{equation}
\end{lemma}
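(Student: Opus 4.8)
\noindent The plan is to reduce (\ref{slt}) to summing a row of Pascal's triangle, by first establishing the closed form
$$
\|L_{b-1}^{j}\|=C_{b-1}^{j+1}
$$
for every $j\ge 0$ (with the convention $C_{b-1}^{j+1}=0$ when $j+1>b-1$).

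For the closed form I would expand the product $L_{b-1}^{j}=L_{b-1}\cdots L_{b-1}$ ($j$ factors): since $(L_{b-1})_{st}=1$ exactly when $s>t$, the $(i,\ell)$-entry of $L_{b-1}^{j}$ equals the number of strictly decreasing chains $i=i_0>i_1>\cdots>i_j=\ell$ in $\{1,\dots,b-1\}$, the same path-counting as in (\ref{ys}). Since all entries are non-negative, $\|L_{b-1}^{j}\|=\sum_{i,\ell}(L_{b-1}^{j})_{i\ell}$ is the total number of strictly decreasing chains of length $j+1$ in $\{1,\dots,b-1\}$, and each such chain is determined by its set of $j+1$ vertices while each $(j+1)$-element subset arranges into exactly one decreasing chain; hence $\|L_{b-1}^{j}\|=C_{b-1}^{j+1}$. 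This is consistent with $L_{b-1}^{b-1}=\mathbf 0$, since then $C_{b-1}^{b}=0$. (Alternatively one can argue by induction on $j$: a standard binomial identity shows that applying $L_{b-1}^{j}$ to the all-ones column vector produces the vector whose $i$-th entry is $C_{i-1}^{j}$, whence $\|L_{b-1}^{j}\|=\sum_{i=1}^{b-1}C_{i-1}^{j}=C_{b-1}^{j+1}$.)

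Next I would substitute into the left-hand side of (\ref{slt}) and use $b=C_{b-1}^{0}+C_{b-1}^{1}$:
$$
b+\sum_{j=1}^{b-2}\|L_{b-1}^{j}\|
=C_{b-1}^{0}+C_{b-1}^{1}+\sum_{j=1}^{b-2}C_{b-1}^{j+1}
=\sum_{i=0}^{b-1}C_{b-1}^{i}=2^{b-1}
$$
by the binomial theorem, which is exactly (\ref{slt}). When $b=2$ the sum over $j$ is empty and the identity degenerates to $2=C_{1}^{0}+C_{1}^{1}$, so the degenerate case causes no trouble.

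There is no real obstacle here. The only points that call for care are the index bookkeeping in the first step — the matrix $L_{b-1}^{j}$ corresponds to chains with $j+1$ vertices, so the relevant binomial coefficient is $C_{b-1}^{j+1}$ and not $C_{b-1}^{j}$ — and remembering that the boundary powers ($L_{b-1}^{0}=I_{b-1}$ contributing $b-1$, and $L_{b-1}^{b-1}=\mathbf 0$) together with the empty-sum case $b=2$ all fit the same formula.
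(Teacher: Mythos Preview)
Your argument is correct. The closed form $\|L_{b-1}^{j}\|=C_{b-1}^{j+1}$ follows exactly as you say from interpreting the entries of $L_{b-1}^{j}$ as counts of strictly decreasing chains, and the substitution plus binomial theorem finishes cleanly. The index bookkeeping (chains of $j+1$ vertices, the $j=0$ and $j=b-1$ edge cases, the empty sum at $b=2$) is handled correctly.

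The paper takes a different route: it never writes down a closed form for $\|L_{b-1}^{j}\|$, but instead observes from the block decomposition
\[
L_{b}^{n}=\begin{pmatrix}0 & \mathbf 0\\ L_{b-1}^{n-1}\mathbf 1 & L_{b-1}^{n}\end{pmatrix}
\]
that $\|L_{b}^{n}\|=\|L_{b-1}^{n-1}\|+\|L_{b-1}^{n}\|$, and then proves (\ref{slt}) by induction on $b$, telescoping this Pascal-type recurrence so that the sum for $b+1$ doubles the sum for $b$. Your approach is more direct and actually yields strictly more information (the individual norms $\|L_{b-1}^{j}\|$), whereas the paper's approach stays purely at the level of matrix block manipulations and fits more uniformly with the style of the later lemmas (e.g.\ Lemma~\ref{U} and Lemma~\ref{id35}), which repeatedly exploit the same block structure. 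Both arguments are short; yours is arguably the more transparent explanation of \emph{why} the answer is $2^{b-1}$.
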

\begin{proof}
By induction we easily get for all $n\in\mathbb{N}$
$$
L_{b}^n=\left(
      \begin{array}{cc}
        0 & \bf{0} \\
        \bf{1} & L_{b-1}  \\
      \end{array}
    \right)^n
    =\left(
      \begin{array}{cc}
        0 & \bf{0} \\
        L_{b-1}^{n-1}\bf{1} & L_{b-1}^n  \\
      \end{array}
    \right),$$
which yields
\begin{equation}\label{slt1}
\|L_{b}^n\|=\|L_{b-1}^{n-1}\|+\|L_{b-1}^n\|.
\end{equation}

Now we are going to prove the equality (\ref{slt}). First it holds for $b=2$ obviously. Next assume that it holds for an arbitrarily given integer $b\geq 2$. Then it holds for $b+1$ because, by the equality (\ref{slt1}) and the inductive assumption, we have
\begin{eqnarray*}
  && b+1+\|L_{b}^1\|+\|L_{b}^2\|+\cdots+\|L_{b}^{b-1}\| \\
  &=& b+1+(b-1+\|L_{b-1}^1\|)+(\|L_{b-1}^1\|+\|L_{b-1}^2\|)+\cdots+(\|L_{b-1}^{b-2}\|+\|L_{b-1}^{b-1}\|) \\
  &=& 2(b+\|L_{b-1}^1\|+\|L_{b-1}^2\|+\cdots+\|L_{b-1}^{b-2}\|)=2^b.
\end{eqnarray*}
This completes the proof. \end{proof}

Denote by $\mathcal{U}_s$ the family of orthogonal matrices in $\mathcal{M}_s$, i.e.
$$\mathcal{U}_s=\{U\in\mathcal{M}_s: U^TU=I_s\}.$$

\begin{lemma}\label{U}
Let $U\in\mathcal{U}_s$.  Then we have the following statements.

1) $\|U\|=s$.

2) $\|VU\|=\|V\|$ for every nonnegative $k\times s$ matrix $V$.

3) Let $B$ be a nonnegative $k\times k$ matrix and $n\in\mathbb{N}$. Then we have
$$
(B{\bf 1})U=B{\bf 1}\,\mbox{ and }\,\,\left(
      \begin{array}{cc}
        U & \bf{0} \\
        \bf{1} & B  \\
      \end{array}
    \right)^{n}
    =\left(
      \begin{array}{cc}
        U^n & \bf{0} \\
        {\bf 1}+B{\bf 1}+\cdots+B^{n-1}{\bf 1}\, & B^n  \\
      \end{array}
    \right).$$

4) Let $b, n\in\mathbb{N}$ satisfy $0\leq b-s\leq n$. Then we have
$$\|\left(
      \begin{array}{cc}
        U & \bf{0} \\
        \bf{1} & L_{b-s}  \\
      \end{array}
    \right)^{n}\|=s2^{b-s}.$$
\end{lemma}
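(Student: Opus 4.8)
The plan is to combine parts 1)–3) of the lemma with the norm identity for $L_k$ from Lemma 2.1. First I would apply part 3) with $B = L_{b-s}$ to get the block formula
$$
\left(\begin{array}{cc} U & {\bf 0} \\ {\bf 1} & L_{b-s} \end{array}\right)^{n}
= \left(\begin{array}{cc} U^n & {\bf 0} \\ {\bf 1}+L_{b-s}{\bf 1}+\cdots+L_{b-s}^{n-1}{\bf 1} & L_{b-s}^n \end{array}\right).
$$
Taking norms and using that $U^n$ is again orthogonal (a product of orthogonal matrices is orthogonal, so $\|U^n\| = s$ by part 1)), and that the norm of a block matrix is the sum of the norms of its blocks, I get
$$
\Bigl\| \left(\begin{array}{cc} U & {\bf 0} \\ {\bf 1} & L_{b-s} \end{array}\right)^{n} \Bigr\|
= s + \bigl\| ({\bf 1}+L_{b-s}{\bf 1}+\cdots+L_{b-s}^{n-1}{\bf 1}) \bigr\| + \|L_{b-s}^n\|.
$$

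Next I would observe that $L_{b-s}$ is nonnegative, so the entries of $\sum_{j=0}^{n-1} L_{b-s}^{j}{\bf 1}$ add up without cancellation; hence the middle norm equals $\sum_{j=0}^{n-1} \|L_{b-s}^{j}{\bf 1}\|$. Since ${\bf 1}$ here is the $(b-s)\times s$ all-ones matrix, right-multiplication by it turns row sums into $s$ copies, giving $\|L_{b-s}^{j}{\bf 1}\| = s\,\|L_{b-s}^{j}{\bf 1}_{\text{col}}\|$ where ${\bf 1}_{\text{col}}$ is the column vector; and $\|L_{b-s}^{j}{\bf 1}_{\text{col}}\|$ is just the sum of all entries of $L_{b-s}^{j}$, i.e. $\|L_{b-s}^{j}\|$ for $j \geq 1$, while for $j=0$ it is $\|{\bf 1}_{\text{col}}\| = b-s$ (the size of $L_{b-s}$). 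Also $\|L_{b-s}^n\| = 0$ once $n \geq b-s$, since $L_{k}^{k} = {\bf 0}$; this is exactly where the hypothesis $b - s \leq n$ is used. Collecting terms,
$$
\Bigl\| \left(\begin{array}{cc} U & {\bf 0} \\ {\bf 1} & L_{b-s} \end{array}\right)^{n} \Bigr\|
= s + s\bigl( (b-s) + \|L_{b-s}^1\| + \cdots + \|L_{b-s}^{b-s-1}\| \bigr)
= s\bigl( (b-s) + \|L_{b-s}^1\| + \cdots + \|L_{b-s}^{b-s-1}\| \bigr) + s.
$$

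Finally I would invoke Lemma 2.1 (equation (\ref{slt})) with $b$ replaced by $b-s+1$: it gives $(b-s) + \|L_{b-s}^1\| + \cdots + \|L_{b-s}^{b-s-1}\| = 2^{b-s}$, whence the total is $s\cdot 2^{b-s}$, as claimed. (One should separately check the degenerate case $b-s = 0$, where $U$ is $b\times b$ orthogonal and the matrix is just $U$ up to the vacuous blocks, giving norm $s = s\cdot 2^0$; and $b-s=1$, where the $L$-block is the $1\times 1$ zero matrix.) The only mildly delicate point — hardly an obstacle — is bookkeeping the shapes of the various ${\bf 1}$ blocks (rectangular $(b-s)\times s$ versus the column vector) so that the factor of $s$ is counted exactly once per power; everything else is a direct substitution of the two preceding lemmas.
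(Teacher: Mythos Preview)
Your approach is essentially identical to the paper's: apply part 3) with $B=L_{b-s}$, split the norm over the blocks, use $L_{b-s}^{j}={\bf 0}$ for $j\ge b-s$, pull out the factor $s$ from each $\|L_{b-s}^{j}{\bf 1}\|$, and finish with (\ref{slt}). There is one bookkeeping slip: substituting $b\mapsto b-s+1$ in (\ref{slt}) yields $(b-s+1)+\|L_{b-s}^{1}\|+\cdots+\|L_{b-s}^{b-s-1}\|=2^{b-s}$, not $(b-s)+\cdots=2^{b-s}$ as you wrote; with the correct value $(b-s)+\cdots=2^{b-s}-1$ your displayed total $s+s\bigl(2^{b-s}-1\bigr)$ does equal $s\cdot 2^{b-s}$, so the conclusion survives.
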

\begin{proof}
We only prove the statement 4). Let $b, n\in\mathbb{N}$ satisfy $0\leq b-s\leq n$. Using the above three statements, the equality $L_k^k=0$,  and Lemma \ref{2.3}, we have
\begin{eqnarray*}
 && \|\left(
      \begin{array}{cc}
        U & \bf{0} \\
        \bf{1} & L_{b-s}  \\
      \end{array}
    \right)^{n}\|  \\
   &=&  \|U^{n}\|+\|{\bf 1}+L_{b-s}^1{\bf 1}+L_{b-s}^2{\bf 1}+\cdots+L_{b-s}^{b-s-1}{\bf 1}\|\\
   &=&  \|U^{n}\|+\|{\bf 1}\|+\|L_{b-s}^1{\bf 1}\|+\|L_{b-s}^2{\bf 1}\|+\cdots+\|L_{b-s}^{b-s-1}{\bf 1}\|\\
   &=&  s+s(b-s+\|L_{b-s}^1\|+\|L_{b-s}^2\|+\cdots+\|L_{b-s}^{b-s-1}\|)= s2^{b-s}.
\end{eqnarray*}
This completes the proof. \end{proof}

\begin{lemma}\label{id35}
$\|T_b^n\|=C_{n+b}^{n+1}$ for all $n\in\mathbb{N}.$
\end{lemma}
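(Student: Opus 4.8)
The plan is to compute $\|T_b^n\|$ by interpreting it combinatorially via the identity $\|M^n\| = \sharp A^{n+1}_M$ from~(\ref{xdd}), and then summing over the possible ``shapes'' of an admissible word. Since $(T_b)_{ij} = 1$ exactly when $i \geq j$, a word $w \in A^{n+1}$ is $T_b$-admissible precisely when $w_1 \geq w_2 \geq \cdots \geq w_{n+1}$, i.e. the admissible words of length $n+1$ are exactly the non-increasing sequences of length $n+1$ over $\{1,2,\dots,b\}$. So the goal reduces to showing that the number of non-increasing sequences of length $n+1$ with entries in $\{1,\dots,b\}$ equals $C_{n+b}^{n+1}$. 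This is the standard stars-and-bars count: such a sequence is determined by the multiset of its entries, equivalently by how many times each value $1,\dots,b$ appears, i.e. by a solution in nonnegative integers of $x_1 + \cdots + x_b = n+1$, and the number of these is $C_{n+1+b-1}^{b-1} = C_{n+b}^{b-1} = C_{n+b}^{n+1}$.

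Alternatively, if one prefers a purely matrix-theoretic induction avoiding the combinatorial dictionary, I would use the block decomposition $T_b = \left(\begin{smallmatrix} 1 & {\bf 0} \\ {\bf 1} & T_{b-1} \end{smallmatrix}\right)$ together with statement 3) of Lemma~\ref{U} applied with $U = I_1 = (1)$ (the $1\times 1$ identity is orthogonal). That gives
$$
T_b^n = \left(
\begin{array}{cc}
1 & {\bf 0}\\
{\bf 1} + T_{b-1}{\bf 1} + \cdots + T_{b-1}^{n-1}{\bf 1} & T_{b-1}^n
\end{array}
\right),
$$
hence $\|T_b^n\| = 1 + \sum_{j=0}^{n-1}\|T_{b-1}^j {\bf 1}\| + \|T_{b-1}^n\|$, where $T_{b-1}^0 = I_{b-1}$ and ${\bf 1}$ is the all-ones column of height $b-1$. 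Since $T_{b-1}^j$ is nonnegative, $\|T_{b-1}^j {\bf 1}\|$ is just the sum of all entries of $T_{b-1}^j$, which is $\|T_{b-1}^{j}\|$ for $j\ge 1$ and equals $\|I_{b-1}{\bf 1}\| = b-1$ for $j=0$. One then closes the induction on $b$ using the combinatorial identity~(\ref{id1}): assuming $\|T_{b-1}^m\| = C_{m+b-1}^{m+1}$ for all $m$, the sum $1 + (b-1) + \sum_{j=1}^{n-1} C_{j+b-1}^{j+1} + C_{n+b-1}^{n+1}$ telescopes via~(\ref{id1}) to $C_{n+b}^{n+1}$.

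I would present the first (combinatorial) argument as the main line, since it is the shortest, and the base case $b=1$ (or $b=2$) is immediate. The only point requiring a little care is the identification of $T_b$-admissibility with the non-increasing condition and the bookkeeping of the stars-and-bars count with the right binomial coefficient; there is no genuine obstacle here, just the need to match indices so that the answer comes out as $C_{n+b}^{n+1}$ rather than an off-by-one variant. If instead the inductive proof is preferred, the one delicate step is correctly reducing $\|T_{b-1}^j{\bf 1}\|$ to $\|T_{b-1}^j\|$ (valid because all matrices involved are nonnegative, so no cancellation occurs) and then invoking~(\ref{id1}) with the summation limits aligned; again routine, but it is where an error would most likely creep in.
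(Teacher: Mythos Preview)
Your proposal is correct. Your second (inductive) approach is essentially the paper's own proof: the paper uses exactly the block decomposition $T_{b+1}=\left(\begin{smallmatrix} 1 & \mathbf{0} \\ \mathbf{1} & T_b \end{smallmatrix}\right)$, Lemma~\ref{U} part 3), and identity~(\ref{id1}) to close an induction on $b$ with base case $b=2$. Your first (combinatorial) approach via~(\ref{xdd}) and stars-and-bars is a genuinely different route: identifying $T_b$-admissible words of length $n+1$ with non-increasing sequences over $\{1,\dots,b\}$ and counting these as multisets gives $C_{n+b}^{n+1}$ in one line, bypassing Lemma~\ref{U} and identity~(\ref{id1}) entirely. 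The paper's route has the minor advantage of reusing machinery already set up for the other results of the paper, while your combinatorial argument is shorter, self-contained, and makes the appearance of the binomial coefficient transparent.
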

\begin{proof} By a simple induction, one has $\|T_2^n\|=n+2=C_{n+2}^{n+1}$ for all $n\in\mathbb{N}$. Thus the desired equality is true for $b=2$. Suppose that it is true for an arbitrarily given integer $b\geq 2$.
Then, using Lemma \ref{U} and (\ref{id1}), we get
\begin{eqnarray*}
\|T_{b+1}^n\| &=& \|\left(
      \begin{array}{cc}
        1 & \bf{0} \\
        \bf{1} & T_b  \\
      \end{array}
    \right)^n\| \\
   &=& 1+b+\|T_{b}^1\|+\cdots+\|T_{b}^{n-1}\|+\|T_{b}^n\| \\
   &=& C_{1+b}^1+C_{1+b}^2+\cdots+C_{n-1+b}^n+C_{n+b}^{n+1}=C_{n+b+1}^{n+1}.
\end{eqnarray*}
Thus the desired equality is true for $b+1$. This proves this lemma.
\end{proof}

\subsection{Some basic facts on $M$-admissible words}

\begin{lemma}\label{l1}
Let $M,N\in\mathcal{M}_b$. Then $M$ is equivalent with $N$ if and only if there is a permutation $\sigma:A\to A$ such that for every integer $n\geq 2$
\begin{equation}\label{comu}
A_M^{n}=\{\sigma(w): w\in A_N^{n}\},
\end{equation}
where we prescribe $\sigma(w)=\sigma(w_1)\sigma(w_2)\cdots\sigma(w_n)$.
\end{lemma}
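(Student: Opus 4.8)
The plan is to prove Lemma \ref{l1} by unwinding both sides into statements about how permutation similarity acts on the combinatorics of admissible words. The key observation is that conjugation by the elementary transposition $P(i,j)$ is exactly the action on $M$ induced by the transposition $(i\ j)$ of the alphabet $A$: if $\tau=(i\ j)$ and $N'=P(i,j)NP(i,j)$, then $N'_{pq}=N_{\tau(p)\tau(q)}$ for all $p,q\in A$. Since any permutation $\sigma$ is a product of transpositions, iterating this gives: $M\sim N$ if and only if there is a permutation $\sigma$ of $A$ with $M_{pq}=N_{\sigma^{-1}(p)\sigma^{-1}(q)}$ for all $p,q$, equivalently $M_{\sigma(p)\sigma(q)}=N_{pq}$. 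So the first step is to establish this purely algebraic reformulation of the equivalence relation $\sim$, which is an easy induction on the number $l$ of transpositions in the product defining $\sim$.

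Second, I would translate the condition $M_{\sigma(p)\sigma(q)}=N_{pq}$ for all $p,q$ into the word condition (\ref{comu}). For the forward direction, assume $M\sim N$ and pick $\sigma$ as above. Given $w\in A_N^n$, so $N_{w_kw_{k+1}}=1$ for $1\le k<n$, we have $M_{\sigma(w_k)\sigma(w_{k+1})}=N_{w_kw_{k+1}}=1$, hence $\sigma(w)\in A_M^n$; applying the same argument to $\sigma^{-1}$ (which witnesses $N\sim M$) gives the reverse inclusion, so $A_M^n=\{\sigma(w):w\in A_N^n\}$. For the converse, assume (\ref{comu}) holds for all $n\ge 2$ with some permutation $\sigma$. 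Taking $n=2$: $w=pq\in A_N^2$ iff $N_{pq}=1$, and $\sigma(w)=\sigma(p)\sigma(q)\in A_M^2$ iff $M_{\sigma(p)\sigma(q)}=1$, so (\ref{comu}) at $n=2$ says exactly $N_{pq}=1\iff M_{\sigma(p)\sigma(q)}=1$, i.e. $M_{\sigma(p)\sigma(q)}=N_{pq}$ for all $p,q$ (both matrices being $\{0,1\}$-valued). By the algebraic reformulation from the first step, this yields $M\sim N$.

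The only genuinely delicate point — and the one I'd treat most carefully — is the correct bookkeeping of the direction of conjugation, i.e. verifying that $P(i,j)NP(i,j)$ has $(p,q)$-entry $N_{\tau(p)\tau(q)}$ rather than some transpose or inverse thereof, and then making sure the permutation $\sigma$ attached to the word map (which should satisfy $A_M^n=\sigma(A_N^n)$, meaning $\sigma$ sends $N$-words to $M$-words) matches the $\sigma$ coming out of the entrywise identity. Concretely, if $M=P(i_l,j_l)\cdots P(i_1,j_1)\,N\,P(i_1,j_1)\cdots P(i_l,j_l)$ and $\tau_s=(i_s\ j_s)$, then setting $\sigma=\tau_l\tau_{l-1}\cdots\tau_1$ one computes $M_{pq}=N_{\sigma^{-1}(p)\,\sigma^{-1}(q)}$, so the map sending an $N$-word $w$ to the $M$-word is $w\mapsto\sigma(w)$. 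Since $\sigma$ is the inverse of the relevant conjugating permutation it is essential to state which one appears in (\ref{comu}); I would simply fix the convention at the outset and carry it consistently, noting that because $\sim$ and the word-equality in (\ref{comu}) are both symmetric in $M,N$, replacing $\sigma$ by $\sigma^{-1}$ does not affect the truth of the statement. Everything else — the induction on $l$, the passage through the case $n=2$, the fact that larger $n$ impose no new constraints once $n=2$ is known — is routine.
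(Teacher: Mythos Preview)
Your proof is correct and follows essentially the same approach as the paper: both reduce the equivalence $\sim$ to the action of a single transposition $P(i,j)$, verify that conjugation by $P(i,j)$ permutes entries via the transposition $\sigma_{ij}=(i\ j)$, and then compose to obtain a general permutation $\sigma$ satisfying (\ref{comu}). Your treatment is slightly more explicit than the paper's in isolating the entrywise identity $M_{\sigma(p)\sigma(q)}=N_{pq}$ and in noting that the converse only needs the case $n=2$, but the argument is the same.
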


\begin{proof} Let $i,j\in A$. Denote by $\sigma_{ij}$ the permutation of $A$ defined by $$\mbox{$\sigma_{ij}(i)=j$, $\sigma_{ij}(j)=i$, and $\sigma_{ij}(k)=k$ for all $k\in A\setminus\{i,j\}$.}$$ Note that $P(i, j)^2=I_b$. We easily see that $N=P(i,j)MP(i,j)$ if and only if
\begin{equation}\label{ey}
(N^{n})_{lk}=(M^{n})_{\sigma_{ij}(l)\sigma_{ij}(k)}
\end{equation}
for all $l,k\in A$ and all $n\in\mathbb{N}$, which in turn is obviously equivalent with $$A_M^{n}=\{\sigma_{ij}(w): w\in A_N^{n}\}$$ for all $n\geq 2$. It then follows that
$$N=P(i_l,j_l)\cdots P(i_2,j_2)P(i_1,j_1)MP(i_1,j_1)P(i_2,j_2)\cdots P(i_l,j_l)$$
if and only if the equality (\ref{comu}) holds  with $\sigma=\sigma_{i_{l}j_{l}}\circ\cdots\circ\sigma_{i_{2}j_{2}}\circ\sigma_{i_1j_1}$.
\end{proof}

Let $M\in\mathcal{M}_b$. Let
\begin{equation}\label{d0}
D_M=\{i\in A: (M^k)_{ii}\geq 1\mbox{ for some }k\in\mathbb{N}\}.
\end{equation}
Then by (\ref{ey}) we have the implication
\begin{equation}\label{im2}
N\sim M\Rightarrow\sharp D_N=\sharp D_M.
\end{equation}
Suppose $M$ is strictly lower triangular, i.e. $M$ satisfies $M_{ij}=0$ for all $i,j\in A$ with $i\leq j$. Then $D_M=\emptyset$. Furthermore, we have the following lemma.

\begin{lemma}\label{empty}
Let $M\in\mathcal{M}_b$. The following three conditions are equivalent.

1) $D_M=\emptyset$.

2) $M^{b}=\bf{0}$.

3) $M$ is equivalent with a strictly lower triangle matrix.
\end{lemma}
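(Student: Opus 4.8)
The plan is to prove the cycle of implications $3)\Rightarrow 2)\Rightarrow 1)\Rightarrow 3)$, since $3)\Rightarrow 2)$ and $2)\Rightarrow 1)$ are essentially immediate and the content lies in $1)\Rightarrow 3)$. For $3)\Rightarrow 2)$: if $M\sim N$ with $N$ strictly lower triangular, then $N^b={\bf 0}$ (a strictly lower triangular $b\times b$ matrix is nilpotent of index at most $b$, exactly as noted for $L_b$ in Section 1 where $L_b^b={\bf 0}$), and since equivalence is realized by conjugation with permutation matrices, $M^b=Q N^b Q^{-1}={\bf 0}$ for the appropriate product $Q$ of the $P(i,j)$'s; alternatively one invokes (\ref{im1})-style reasoning or just $\|M^b\|=\|N^b\|=0$. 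For $2)\Rightarrow 1)$: if $M^b={\bf 0}$ then $M^k={\bf 0}$ for all $k\geq b$, so $(M^k)_{ii}=0$ for all such $k$; and for $k<b$ we argue that $(M^k)_{ii}\geq 1$ for some $i$ would force, by pigeonhole along the directed walk in the digraph of $M$, a closed walk of some length $k'$ with $(M^{k'})_{ii}\geq 1$ that can be iterated, contradicting nilpotency. Concretely: $(M^k)_{ii}\geq 1$ means there is an $M$-admissible closed word, so $(M^{2k})_{ii}\geq (M^k)_{ii}^2\geq 1$, and inductively $(M^{mk})_{ii}\geq 1$ for all $m$, contradicting $M^b={\bf 0}$ once $mk\geq b$. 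Hence $D_M=\emptyset$.

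The heart of the matter is $1)\Rightarrow 3)$: assuming no index lies on a closed walk, produce a permutation $\sigma$ of $A$ under which $M$ becomes strictly lower triangular. This is the classical statement that a digraph with no directed cycles admits a topological ordering. The approach I would take: consider the digraph $G$ on vertex set $A$ with an edge $i\to j$ whenever $M_{ij}=1$. The hypothesis $D_M=\emptyset$ says $G$ has no closed walk, equivalently no directed cycle (a closed walk of minimal length is a cycle). A finite acyclic digraph has a vertex of out-degree zero: otherwise, following edges forward indefinitely from any vertex, one revisits a vertex within $b$ steps and obtains a cycle. Remove such a "sink" vertex, which leaves a smaller acyclic digraph; iterate to list the vertices $i_b, i_{b-1},\dots,i_1$ in the order they are removed. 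Define $\sigma(i_t)=t$. Then every edge of $G$ goes from a vertex removed later to one removed earlier, i.e. from larger $\sigma$-value to smaller, so the relabelled matrix $N$ with $N_{\sigma(i)\sigma(j)}=M_{ij}$ satisfies $N_{st}=0$ whenever $s\leq t$, i.e. $N$ is strictly lower triangular. Finally I would record that this relabelling is exactly conjugation by the permutation matrix $P_\sigma$, and $P_\sigma$ is a product of transposition matrices $P(i,j)$, so $M\sim N$ in the precise sense defined before (\ref{im1}); Lemma \ref{l1} can also be cited here to pass between the permutation-of-words formulation and the matrix-conjugation formulation if convenient.

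I expect the main obstacle to be purely expository rather than mathematical: stating the topological-sort / acyclic-implies-has-a-sink argument cleanly in the paper's notation (digraphs are not introduced, so one may prefer to phrase everything directly in terms of $M$-admissible words and the entries $(M^k)_{ij}$), and making the bridge between "permutation relabelling" and the paper's conjugation-by-transpositions definition of $\sim$ without friction. The one genuine lemma-like point that deserves an explicit sentence is: in a finite index set, absence of any closed $M$-admissible word (which is what $D_M=\emptyset$ encodes, via (\ref{ys})) is equivalent to absence of an infinite forward $M$-admissible word, which is what licenses the existence of a sink. Everything else is bookkeeping.
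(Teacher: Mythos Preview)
Your proposal is correct, and for $2)\Leftrightarrow 1)$ it matches the paper essentially verbatim. The genuine divergence is in $1)\Rightarrow 3)$. The paper does \emph{not} run a topological sort by repeatedly peeling off a sink; instead it works directly with $M$-admissible words. From $D_M=\emptyset$ the paper first deduces (via $M^b={\bf 0}$ and pigeonhole) that every $w\in A_M^*$ has length at most $b$, has pairwise distinct letters, and that $A_M^*$ contains no word from $w_t$ to $w_s$ when $s<t$. It then chooses one word $w$, finds a permutation $\sigma$ making $\sigma(w)$ strictly decreasing, passes to the equivalent matrix $B$, and argues that for any further word $u\in A_B^*$ the letters of $u$ already lying in $\{\sigma(w_1),\dots,\sigma(w_n)\}$ are automatically in the right relative order; hence a permutation $\tau$ fixing $\sigma(w)$ can make $u$ decreasing as well. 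Iterating over the finitely many words in $A_M^*$ yields the strictly lower triangular representative.

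Your sink-removal argument is the standard topological-ordering proof and is arguably cleaner: it produces the permutation in one sweep and needs no bookkeeping about compatibility of successive relabellings. The paper's word-by-word approach, on the other hand, stays entirely within the $A_M^*$ framework already set up (no digraph language) and makes the structural consequence of $D_M=\emptyset$ on $A_M^*$ explicit, which is reused elsewhere. Either is fine; if you write yours up, the only care point you already flagged---phrasing ``out-degree zero'' as ``$r_i(M)={\bf 0}$'' or ``no $M$-admissible word of head $i$ has length $\geq 2$''---is exactly what is needed to stay in the paper's notation.
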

\begin{proof}
$1)\Rightarrow 2)$. Suppose $M^{b}\neq\bf{0}$. Then $A^{b+1}_M\neq\emptyset$. Let $j_1j_2\cdots j_{b+1}\in A^{b+1}_M$. By the pigeonhole principle, there is some $i\in A$ and $1\leq t< s\leq b+1$ such that $i=j_t=j_s$, so
$A^{s-t+1}_M(i,i)\neq\emptyset$ or equivalently $(M^{s-t})_{ii}\geq 1$, and so $D_M\neq\emptyset$.

$2)\Rightarrow 1)$. If $D_M\neq\emptyset$, there is some $i\in A$ and $k\in\mathbb{N}$ such that $(M^k)_{ii}\geq 1$, so $(M^{kn})_{ii}\geq 1$ for all $n\in\mathbb{N}$, which implies $M^{b}\neq\bf{0}$.

$3)\Rightarrow 1)$. This is obvious.

$1)\Rightarrow 3)$. Suppose $D_M=\emptyset$. We are going to show that $M$ is equivalent with a matrix $N$ with the property: $j_1>j_2>\cdots>j_l$ for each $j_1j_2\cdots j_l\in A^{*}_N$. This property implies that $N$ is strictly lower triangular.

As was shown, we have $M^b=\bf{0}$, so $A^{b+1}_M=\emptyset$ by (\ref{xdd}), which implies that $A^{*}_M$ has the property:
If $w_1w_2\cdots w_n\in A^{*}_M$, then $n\leq b$, the letters $w_1,w_2,\cdots, w_n$ are pairwise distinct, and $A^{*}_M$ contains no words of head $w_t$ and tail  $w_s$ for any $1\leq s<t\leq n$.

Without loss of generality assume $A^{*}_M\neq \emptyset$. Let $w=w_1w_2\cdots w_n\in A^{*}_M$ be given. By the above property of $A_M^*$ there is a permutation $\sigma:A\to A$ such that
\begin{equation}\label{order}
\sigma(w_1)>\sigma(w_2)>\cdots>\sigma(w_n).
\end{equation}
Then, by Lemma \ref{l1}, $M$ is equivalent with a matrix $B$ with $\sigma(w)\in A_B^*$. By (\ref{im1}) and (\ref{xdd}) one has $\sharp A_B^*=\sharp A_M^*$. By (\ref{im2}) one has $D_B=\emptyset$, so $A_B^*$ shares the above property of $A_M^*$.

Now let $u=u_1u_2\cdots u_l\in A^{*}_B\setminus\{\sigma(w)\}$, if it exists. Then $u_s>u_t$ by (\ref{order}), provided that $u_s,u_t\in\{\sigma(w_1),\sigma(w_2),\cdots, \sigma(w_n)\}$ and $1\leq s<t\leq l$. Therefore there is a permutation $\tau:A\to A$ such that $\tau(\sigma(w))=\sigma(w)$ and $$\tau(u_1)>\tau(u_2)>\cdots>\tau(u_l).$$
Using again Lemma \ref{l1}, $M$ is equivalent with a matrix $C$ with $\tau(u), \sigma(w)\in A_C^*$. Continuing this argument, we see that $M$ is equivalent with a matrix $N$ with the desired property. This completes the proof.
\end{proof}

\begin{lemma}\label{nP}
Let $M\in\mathcal{M}_b({\bf P_1})$. Then $D_M\neq\emptyset$. Moreover, if $(M^k)_{ii}>1$ for some $k\in \mathbb{N}$ and some $i\in A$ then $\{\|M^n\|\}_{n=1}^\infty$ tends to $\infty$ exponentially.
\end{lemma}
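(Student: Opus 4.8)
The plan is to establish the two assertions separately, both via the admissible-words interpretation \eqref{xdd}.

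For the first assertion, $D_M\neq\emptyset$: since $M$ satisfies ${\bf P_1}$, I claim $M$ cannot be nilpotent, and then invoke Lemma \ref{empty}. Concretely, pick any column $c_j(M)\neq{\bf 0}$, say $M_{ij}=1$; then by ${\bf P_1}$ the row $r_i(M)\neq{\bf 0}$, so $M_{ik}=1$ for some $k$; again $c_k(M)\neq{\bf 0}$ since $M_{ik}=1$, hence $r_k(M)\neq{\bf 0}$, and so on. This produces an infinite walk in the digraph of $M$, so $M^n\neq{\bf 0}$ for all $n$, i.e.\ $A^{n}_M\neq\emptyset$ for all $n$. In particular $A^{b+1}_M\neq\emptyset$, and the pigeonhole argument already used in the proof of Lemma \ref{empty} ($1)\Rightarrow 2)$) gives a repeated letter in a word of length $b+1$, hence $(M^{s-t})_{ii}\geq 1$ for some $i$, i.e.\ $D_M\neq\emptyset$. (Equivalently one just cites Lemma \ref{empty}: ${\bf P_1}$ implies $M^b\neq{\bf 0}$, hence $D_M\neq\emptyset$.)

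For the second assertion, suppose $(M^k)_{ii}>1$ for some $k\in\mathbb{N}$ and some $i\in A$. By \eqref{ys}, $(M^k)_{ii}=\sharp A^{k+1}_M(i,i)\geq 2$, so there are two distinct $M$-admissible words $u=iu_2\cdots u_k i$ and $v=iv_2\cdots v_k i$ of length $k+1$, both with head $i$ and tail $i$. Deleting the last letter, set $\widehat u = iu_2\cdots u_k$ and $\widehat v = iv_2\cdots v_k$, two distinct words of length $k$. For any sequence $\varepsilon=(\varepsilon_1,\dots,\varepsilon_m)\in\{u,v\}^m$, the concatenation $\widehat\varepsilon_1\widehat\varepsilon_2\cdots\widehat\varepsilon_m\, i$ is $M$-admissible of length $mk+1$, because each block begins and ends (after re-appending $i$) compatibly at the letter $i$. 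Distinct $\varepsilon$ give distinct such words (the block decomposition is recoverable since each block has fixed length $k$ and $\widehat u\neq\widehat v$). Hence $\sharp A^{mk+1}_M\geq 2^m$, so by \eqref{xdd} $\|M^{mk}\|\geq 2^m = (2^{1/k})^{mk}$ for all $m\in\mathbb{N}$. Together with the spectral radius formula $\rho(M)=\lim_n\|M^n\|^{1/n}$ this yields $\rho(M)\geq 2^{1/k}>1$, and therefore $\{\|M^n\|\}_{n=1}^\infty$ tends to $\infty$ exponentially.

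The routine bookkeeping — checking that the concatenated words are genuinely $M$-admissible at the junctions and that the map $\varepsilon\mapsto$ (concatenated word) is injective — is the only place requiring care, but it is essentially the standard "pumping" construction; I expect no real obstacle. Alternatively, one can phrase the exponential growth directly: from $\|M^{mk}\|\geq 2^m$ and submultiplicativity of the norm along iterates one gets a uniform exponential lower bound $\|M^n\|\geq c\,\lambda^n$ with $\lambda=2^{1/k}$, which is the statement to be proved.
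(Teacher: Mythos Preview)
Your proof is essentially correct and follows the same line as the paper's, though with one index slip and slightly different packaging.

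For the first assertion, your infinite-walk construction has a small error: from $c_j(M)\neq{\bf 0}$ and $M_{ij}=1$, condition ${\bf P_1}$ gives $r_j(M)\neq{\bf 0}$, not $r_i(M)\neq{\bf 0}$. The walk should proceed $j\to k\to\cdots$ rather than starting at $i$. Once this is fixed the argument is fine, and your parenthetical ``equivalently one just cites Lemma~\ref{empty}'' is exactly what the paper does. The paper frames it slightly differently: it observes that ${\bf P_1}$ is equivalent to $\{\|M^n\|\}_{n\geq 1}$ being a nondecreasing sequence of positive integers, which immediately gives $M^b\neq{\bf 0}$ and is a useful fact later.

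For the second assertion, your admissible-words construction is just an unpacking of the inequality $(M^{kn})_{ii}\geq((M^k)_{ii})^n\geq 2^n$, which is all the paper writes. Your primary route via the spectral radius formula is correct. The ``alternative'' at the end is misstated: submultiplicativity of $\|\cdot\|$ gives upper bounds, not lower bounds, so it does not by itself turn $\|M^{mk}\|\geq 2^m$ into $\|M^n\|\geq c\lambda^n$. What actually fills the gap between the subsequence bound and full exponential growth is either the spectral radius formula (as you used) or the monotonicity of $\|M^n\|$ under ${\bf P_1}$ (as the paper implicitly uses).
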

\begin{proof}
We remark that $M\in\mathcal{M}_b({\bf P_1})$ if and only if $\{\sharp A_M^n\}_{n=2}^\infty$, or equivalently, $\{\|M^n\|\}_{n=1}^\infty$ is a nondecreasing sequence of positive integers. In particular, we have $M^b\neq{\bf 0}$, so $D_M\neq\emptyset$ by Lemma \ref{empty}.

Suppose $(M^k)_{ii}>1$ for some $k\in \mathbb{N}$ and some $i\in A$. Then we have $$\|M^{kn}\|\geq((M^k)_{ii})^n\geq2^n$$ for all $n\in\mathbb{N}$. Therefore $\{\|M^n\|\}_{n=1}^\infty$ tends to $\infty$ exponentially.
\end{proof}

Let $M\in\mathcal{M}_b$. We say that $M$ satisfies the condition ${\bf P_2}$, if $(M^k)_{ii}\leq1$ for all $i\in A$ and all $k\in \mathbb{N}$. Let
$$\mathcal{M}_b({\bf P_1, P_2})=\{M\in\mathcal{M}_b: M \mbox{ satisfies }{\bf P_1}\mbox{ and }{\bf P_2}\}.$$

\begin{lemma}\label{if}
Let $M\in\mathcal{M}_b({\bf P_1, P_2})$. Then we have the following statements.

1) $A_M^{\mathbb{N}}$ has a unique periodic word of head $i$ for each $i\in D_M$.

2) $A_M^{\mathbb{N}}$ has no periodic word of head $j$ for any $j\in A\setminus D_M$.

3) Every word of $A_M^{\mathbb{N}}$ is ultimately periodic.
\end{lemma}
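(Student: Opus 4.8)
\textbf{Proof proposal for Lemma~\ref{if}.}

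The plan is to exploit condition ${\bf P_2}$, which says that for every letter $i$ and every length $k$ there is \emph{at most one} $M$-admissible loop at $i$ of length $k$ (i.e.\ $(M^k)_{ii}\le 1$). First I would establish a structural fact about loops: if $w$ and $w'$ are two $M$-admissible cycles based at the same letter $i$ (words in $A^*_M(i,i)$ with first and last letter $i$), then I claim one is a ``rotation-power'' of the other in a strong sense. Concretely, from $(M^{|w|-1})_{ii}\le 1$ and $(M^{|w'|-1})_{ii}\le 1$ and the fact that concatenating cycles at $i$ gives longer cycles at $i$, I would argue that all cycles through $i$ are powers of a single shortest cycle $c^{(i)}$ through $i$: if there were two cycles $u,v$ at $i$ that are not both powers of a common shorter cycle, one could (by concatenating $u$'s and $v$'s to a common length, as in the pigeonhole argument of Lemma~\ref{empty}) produce a length $k$ with $(M^k)_{ii}\ge 2$, contradicting ${\bf P_2}$. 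This pins down, for each $i\in D_M$, a unique primitive cycle, and hence a unique periodic infinite word $(c^{(i)})^\infty$ of head $i$; this proves statement~1). The main obstacle is making the ``common refinement of cycles'' argument fully rigorous --- this is essentially a Fine--Wilf / free-monoid type statement (two words with a common power have a common root), and I would either cite it or give the short combinatorial argument counting distinct concatenations of a fixed total length.

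For statement~2), let $j\in A\setminus D_M$ and suppose, for contradiction, that $A_M^{\mathbb{N}}$ contains a periodic word $v^\infty$ of head $j$, say with period $v=v_1\cdots v_p$, $v_1=j$. Then $v_1 v_2\cdots v_p v_1$ is an $M$-admissible word, so $(M^p)_{jj}\ge 1$, i.e.\ $j\in D_M$ by the definition~(\ref{d0}) of $D_M$ --- a contradiction. This step is immediate.

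For statement~3), let $w=w_1w_2\cdots\in A_M^{\mathbb{N}}$ be arbitrary. Since $A$ is finite, some letter $i$ occurs infinitely often in $w$; fix two occurrences at positions $s<t$ with $w_s=w_t=i$, so that $u:=w_sw_{s+1}\cdots w_t$ is an $M$-admissible cycle at $i$, hence $i\in D_M$. I then want to show that the tail of $w$ from position $s$ onward equals $(c^{(i)})^\infty$ where $c^{(i)}$ is the primitive cycle at $i$ from statement~1). Take any later occurrence of $i$ in $w$, at position $t'>s$; then $w_s\cdots w_{t'}$ is a cycle at $i$, hence (by the structural fact proved for statement~1)) a power of $c^{(i)}$, so the letters $w_s,\dots,w_{t'}$ agree with the corresponding prefix of $(c^{(i)})^\infty$. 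Letting $t'\to\infty$ through the infinitely many occurrences of $i$ shows $w_sw_{s+1}\cdots=(c^{(i)})^\infty$, so $w$ is ultimately periodic. The only subtlety here is that I am reusing the cycle-structure lemma from part~1), so I would state that lemma once, up front, and cite it in both places; with that in hand, statements~2) and~3) are short and statement~1) carries essentially all the weight.
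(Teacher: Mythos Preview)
Your proposal is correct and follows essentially the same line as the paper, but the organization differs in one noteworthy way. You front-load a \emph{stronger} structural lemma --- that every $M$-admissible cycle at $i$ is a power of a single primitive cycle $c^{(i)}$ --- and then deduce all three parts from it; as you note, the clean way to justify this uses a Fine--Wilf / Lyndon--Sch\"utzenberger fact about words with a common power. The paper avoids this machinery: for part~1) it exhibits, for two distinct primitive periods $w$ (length $k$) and $u$ (length $l$) at $i$, the explicit pair $u^{k+l}i$ and $(uw)^l i$ in $A_M^{l(k+l)+1}(i,i)$, directly contradicting ${\bf P_2}$. For part~3) the paper does \emph{not} need your stronger cycle lemma: it chooses $i$ so that the positions $\{n_k\}$ are \emph{exactly} the occurrences of $i$, so each block $iu_{n_k+1}\cdots u_{n_{k+1}-1}$ contains $i$ only at its head; then the uniqueness of the periodic word of head $i$ (whose primitive period has pairwise distinct letters, as shown in part~1)) forces all these blocks to coincide. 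Your route is a bit more conceptual and reusable; the paper's is self-contained and sidesteps the combinatorics-on-words citation. Part~2) is identical in both.
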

\begin{proof}
1) Let $M\in\mathcal{M}_b({\bf P_1, P_2})$. We have by (\ref{d0}) and Lemma \ref{nP}
$$
D_M=\{i\in A: (M^k)_{ii}=1\mbox{ for some }k\in\mathbb{N}\}\neq\emptyset.
$$
Let $i\in D_M$ and let $k\in\mathbb{N}$ be the smallest integer such that $(M^k)_{ii}=1$. By (\ref{ys}), $A_M^{k+1}(i,i)$ has a unique word, denoted as $ii_2i_3\cdots i_ki$. Then $i, i_2, \cdots, i_k$ are pairwise distinct and belong to $D_M$.
Let $w=ii_2i_3\cdots i_k$. Then $w^\infty$ is a periodic word of $A_M^{\mathbb{N}}$ of head $i$. To prove it is unique, suppose $u^\infty\in A^{\mathbb{N}}_M$, where $u=iu_2u_3\cdots u_l\neq w$, and $i, u_2, \cdots, u_l$ are pairwise distinct. Then $u^{k+l}i$ and  $(uw)^{l}i$ are two distinct words in $A_M^{l(k+l)+1}(i,i)$, giving $(M^{l(k+l)})_{ii}\geq 2$, contradicting the condition ${\bf P_2}$.

2) For each $j\in A\setminus D_M$ one has $(M^k)_{jj}=0$ for all $k\in\mathbb{N}$ by (\ref{d0}) and ${\bf P_2}$, so $A_M^{k}(j,j)=\emptyset$ for all $k\geq 2$, and so $A_M^{\mathbb{N}}$ has no periodic word of head $j$.

3) Let $u\in A_M^{\mathbb{N}}$. There is a digit $i\in A$ and a strictly increasing sequence $\{n_k\}_{k=1}^\infty$ of positive integers such that
$u_n=i$ if and only if $n\in\{n_k: k\in\mathbb{N}\}$, so $(iu_{n_k+1}\cdots u_{n_{k+1}-1})^\infty$ is a periodic word of head $i$ in $A_M^{\mathbb{N}}$ for each $k\in\mathbb{N}$. Now by 1) and 2) we have $i\in D_M$ and
$$iu_{n_k+1}\cdots u_{n_{k+1}-1}=iu_{n_1+1}\cdots u_{n_{2}-1}$$
for all $k\in\mathbb{N}$. Thus $u$ is ultimately periodic.
\end{proof}

\begin{lemma}\label{iif}
Let $M\in\mathcal{M}_b({\bf P_1, P_2})$. The following conditions are equivalent.

1) $\{\|M^n\|\}_{n=1}^\infty$ is bounded.

2) $A_M^{\mathbb{N}}$ has a unique word of head $i$ for each $i\in D_M$.
\end{lemma}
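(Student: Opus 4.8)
The plan is to prove the equivalence of the two conditions for $M\in\mathcal{M}_b({\bf P_1,P_2})$. The key structural fact I would exploit is Lemma \ref{if}(3): every word of $A_M^{\mathbb{N}}$ is ultimately periodic, and by Lemma \ref{if}(1)--(2) the periodic ``tails'' are constrained to live over $D_M$, with exactly one periodic word of head $i$ for each $i\in D_M$. I would also lean on the reformulation noted in the proof of Lemma \ref{nP}, namely that $M\in\mathcal{M}_b({\bf P_1})$ iff $\{\|M^n\|\}_{n=1}^\infty$ is nondecreasing; hence boundedness is the same as eventual constancy, and by \eqref{xdd} the quantity $\|M^n\|=\sharp A^{n+1}_M$ stabilizes iff the map $w\mapsto w_1\cdots w_n$ (deleting the last letter) is eventually a bijection from $A^{n+1}_M$ onto $A^n_M$, i.e. iff every admissible word of length $n$ has exactly one admissible one-letter extension for all large $n$.

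For the direction $2)\Rightarrow 1)$: assuming $A_M^{\mathbb{N}}$ has a unique word of head $i$ for each $i\in D_M$, I would argue that each $M$-admissible finite word $w=w_1\cdots w_n$ determines, for $n$ large relative to $b$, a unique continuation. Concretely, by the pigeonhole argument used in Lemma \ref{empty}, any admissible word of length $>b$ must revisit some letter, and that letter lies in $D_M$; combining this with Lemma \ref{if}(1)--(2), once $w$ is long enough it has entered the (unique) periodic pattern of head equal to its first revisited $D_M$-letter, so its future is forced. This gives $\sharp A_M^{n+1}=\sharp A_M^n$ for all $n$ exceeding some threshold (one can take the threshold around $2b$), hence $\{\|M^n\|\}$ is bounded.

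For the direction $1)\Rightarrow 2)$, equivalently its contrapositive: if for some $i\in D_M$ the set $A_M^{\mathbb{N}}$ has at least two words of head $i$, I want to produce a growing number of admissible finite words. Two distinct infinite words $u,v$ of head $i$ branch at some coordinate, and since both are ultimately periodic and $i\in D_M$, I can splice them: take the periodic word $w^\infty$ of head $i$ (from Lemma \ref{if}(1)) and interleave finite blocks of the two branches to build, for each $n$, at least two admissible words of length $n$ sharing head $i$ and tail $i$ but differing somewhere in between — more precisely, I would count prefixes. The cleanest route: since the two infinite words differ but each is admissible, the prefixes of $u$ and the prefixes of $v$ of a common length $n$ are eventually distinct admissible words, and because the digit $i$ recurs infinitely often in $w^\infty$, one can append the cycle $w$ a variable number of times before switching branch, yielding a number of admissible words of length $n$ that is at least linear in $n$, so unbounded; monotonicity of $\{\|M^n\|\}$ then gives unboundedness.

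I expect the main obstacle to be the bookkeeping in $1)\Rightarrow 2)$: turning ``two distinct ultimately periodic infinite admissible words of the same head'' into a quantitatively unbounded family of \emph{finite} admissible words, while staying inside the constraints imposed by ${\bf P_2}$ (which forbids $(M^k)_{ii}\ge 2$ and so limits how freely one may splice at a given $D_M$-letter). The resolution is that non-uniqueness of \emph{infinite} words forces a branch point at some letter $j$ that need not lie in $D_M$, so the splicing is done not by repeating a cycle at $j$ but by choosing, at each of the infinitely many returns to $i$, whether to eventually steer toward $u$'s tail or $v$'s tail; tracking that this genuinely produces distinct finite prefixes of unboundedly many lengths is the delicate step, and I would make it rigorous by exhibiting, for every $n$, an explicit injection from $\{0,1,\dots,\lfloor n/(k+l)\rfloor\}$ into $A_M^{n}(i,\ast)$ where $k,l$ are the relevant period and pre-period lengths.
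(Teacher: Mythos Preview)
Your proposal is correct, and the core mechanism in each direction matches the paper's, but you package both directions more elaborately than necessary.

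For $1)\Rightarrow 2)$, the paper does exactly your ``append the cycle a variable number of times before switching branch'' idea, but stays in $A_M^{\mathbb{N}}$: if $u\neq w^\infty$ is a second infinite admissible word of head $i$, then $\{w^ku:k\in\mathbb{N}\}\subset A_M^{\mathbb{N}}$ is already an infinite family, and boundedness of $\{\|M^n\|\}$ (hence eventual constancy of $\sharp A_M^n$) forces $A_M^{\mathbb{N}}$ to be finite, a contradiction. There is no need to project to finite prefixes or build an explicit injection, and your worry about $\mathbf{P_2}$ obstructing the splicing is unfounded: the concatenation $w^ku$ is admissible simply because $w^\infty$ and $u$ are, and the last letter of $w$ is followed by $i$ in $w^\infty$.

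For $2)\Rightarrow 1)$, your unique-extension argument works, but the paper gives a cleaner explicit bound: letting $U$ be the set of $M$-admissible finite words all of whose letters lie in $A\setminus D_M$, one has $|u|\le\sharp(A\setminus D_M)$ for every $u\in U$ (pigeonhole plus Lemma~\ref{if}(2)), so $U$ is finite, and then
\[
A_M^{\mathbb{N}}\subseteq\{w^{(i)}:i\in D_M\}\cup\{uw^{(i)}:u\in U,\ i\in D_M\},
\]
which exhibits $A_M^{\mathbb{N}}$ as a finite set directly. Your route via ``long finite words have forced continuations'' gives the same conclusion but with more bookkeeping and a less transparent bound.
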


\begin{proof}
$1)\Rightarrow 2).$ Suppose $\{\|M^n\|\}_{n=1}^\infty$ is bounded. Then $\{\sharp A_M^{n}\}_{n=2}^\infty$ is bounded, so there is an integer $n_0\geq 2$ such that  $\sharp A_M^{n} =\sharp A_M^{n_0}$ for all $n\geq n_0$. Therefore $A^{\mathbb{N}}_M$ is a finite set.  Let $i\in D_M$. By Lemma \ref{if}, $A_M^{\mathbb{N}}$ has a periodic word $w^\infty$ of head $i$. We conclude that it is the only word of $A_M^{\mathbb{N}}$ of head $i$. Indeed, if $A^{\mathbb{N}}_M$ has a word $u$ of head $i$ with $u\neq w^\infty$ then $\{w^ku: k\in \mathbb{N}\}\subset A^{\mathbb{N}}_M$, which implies that $A^{\mathbb{N}}_M$ is infinite, a contradiction.

$2)\Rightarrow 1).$  Suppose, for each $i\in D_M$, $A_M^{\mathbb{N}}$ has a unique word of head $i$, which is denoted by $w^{(i)}$. Then these words are periodic by Lemma \ref{if}.

Let
$$U=\{u\in A_M^*:  u_1,u_2,\cdots,u_{|u|}\in A\setminus D_M\}.$$
Since $A_M^{\mathbb{N}}$ has no periodic word of head $j\in A\setminus D_M$, by the pigeonhole principle we have $|u|\leq \sharp(A\setminus D_M)$ for each $u\in U$. Thus $U$ is a finite set. Observing that $$A_M^{\mathbb{N}}\subseteq\{w^{(i)}: i\in D_M\}\cup\{uw^{(i)}: u\in U, i\in D_M\},$$ we see that $A^{\mathbb{N}}_M$ is a finite set, which implies that $\{\|M^n\|\}_{n=1}^\infty$ is bounded.
\end{proof}

Let $M\in\mathcal{M}_b({\bf P_1, P_2})$. For each $i\in D_M$ denote by $D_{M,i}$ the set consisting of all letters of the periodic word of head $i$ in $A_M^{\mathbb{N}}$.
Then $j\in D_{M,i}$ if and only if $i\in D_{M,j}$ for $i,j\in D_M$. Let
\begin{equation}\label{ok}
D_{M}^0=\{i\in D_M: A_M^*(j,l)=\emptyset \mbox{ for all } j\in D_{M,i} \mbox{ and all } l\in A\setminus D_M\}.\end{equation}
Then for each $i\in D_M^0$ we have

(a) $D_{M,i}\subset D_M^0$, and

(b) $M_{jl}=0$ for all $j\in D_{M,i}$ and all $l\in A\setminus D_M$.
\begin{lemma}\label{ijf}
Let $M\in\mathcal{M}_b({\bf P_1, P_2})$. Then $D_{M}^0\neq\emptyset$.
\end{lemma}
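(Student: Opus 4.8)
\noindent The plan is to partition $D_M$ into its cycle classes, to note that the natural reachability relation among these classes is a partial order (this is exactly where ${\bf P_2}$ is used), and to show that any minimal cycle class lies inside $D_M^0$. First I would record what Lemma \ref{if} gives for free. For $i\in D_M$ the unique periodic word of head $i$ in $A_M^{\mathbb{N}}$ has the form $(ii_2\cdots i_k)^\infty$ with $i,i_2,\dots,i_k$ pairwise distinct, so $D_{M,i}$ is the vertex set of the elementary cycle $i\to i_2\to\cdots\to i_k\to i$; rotating this word cyclically and appealing to uniqueness again shows $D_{M,j}=D_{M,i}$ for every $j\in D_{M,i}$, so the sets $D_{M,i}$ partition $D_M$, which is nonempty by Lemma \ref{nP}. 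The auxiliary fact I would use twice is: \emph{if $i\in D_M$ and some word in $A_M^*(i,i)$ contains a letter $v\notin D_{M,i}$, then ${\bf P_2}$ fails}. Such a word, viewed as an admissible closed walk at $i$, has some length $e\geq2$ (length $\geq2$ because $v\neq i$), while going once around the cycle $D_{M,i}$ gives a word in $A_M^*(i,i)$ of some length $d\geq1$ that omits $v$; then $d$ copies of the first word and $e$ copies of the second give two distinct words in $A_M^{de+1}(i,i)$, so $(M^{de})_{ii}\geq2$ by (\ref{ys}), contradicting ${\bf P_2}$.

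Next I would set $C\succeq C'$ for cycle classes $C,C'$ whenever some letter of $C'$ is reachable from some letter of $C$ by an $M$-admissible word. Since any two letters of a cycle class are mutually reachable, $\succeq$ is reflexive and transitive; it is antisymmetric because, if $C\succeq C'\succeq C$ with $C\neq C'$, then splicing the two connecting words with suitable arcs of the cycles $C$ and $C'$ produces a word in $A_M^*(i,i)$ for some $i\in C$ that contains a letter of $C'$, and that letter is not in $D_{M,i}=C$ since distinct cycle classes are disjoint, so the auxiliary fact contradicts ${\bf P_2}$. Thus $\succeq$ is a partial order on the finite nonempty set of cycle classes, and hence some cycle class $C$ is minimal, meaning $C\succeq C'$ for no $C'\neq C$.

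Finally I would prove that for this minimal $C$ no letter of $A\setminus D_M$ is reachable from $C$; then for any $i\in C$, any $j\in D_{M,i}=C$ and any $l\in A\setminus D_M$ we have $A_M^*(j,l)=\emptyset$, i.e. $i\in D_M^0$, so $D_M^0\neq\emptyset$. Suppose instead that some $l\in A\setminus D_M$ is reachable from $i\in C$. Then $l$ has an incoming edge, so by ${\bf P_1}$ it has an outgoing edge; iterating, we extend an admissible word out of $l$ indefinitely, and by the pigeonhole principle it revisits some letter $u$, whence $u\in D_M$ and $u$ is reachable from $i$ via $l$. With $C'=D_{M,u}$ this gives $C\succeq C'$. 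If $C'=C$, then $u\in C$, and the word from $i$ to $l$ to $u$ followed by the arc of the cycle $C$ from $u$ back to $i$ is a word in $A_M^*(i,i)$ containing $l\notin D_M\supseteq C=D_{M,i}$, contradicting ${\bf P_2}$ through the auxiliary fact; hence $C'\neq C$, contradicting the minimality of $C$.

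The step I expect to be the main obstacle is the antisymmetry of $\succeq$: this is the precise point at which ${\bf P_2}$ is indispensable, and the underlying ``no admissible loop through a letter outside a cycle class'' principle is what simultaneously forces a minimal class to exist and rules out escape routes from it that curl back into $D_M$. The rest --- the cycle-class bookkeeping from Lemma \ref{if}, the pigeonhole extension powered by ${\bf P_1}$, and the translation of ``no escape'' into $A_M^*(j,l)=\emptyset$ --- is routine.
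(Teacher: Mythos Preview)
Your proof is correct, and it takes a genuinely different route from the paper's. The paper argues by direct contradiction: assuming $D_M^0=\emptyset$, it repeatedly uses ${\bf P_1}$ and Lemma~\ref{if} to build an infinite admissible word of the shape $i_1\cdots j_1\cdots l_1\cdots i_2\cdots j_2\cdots l_2\cdots$ with each $l_k\in A\setminus D_M$, and then pigeonhole on the $l_k$'s produces a closed admissible walk based at some $l_s\notin D_M$, contradicting the definition of $D_M$. No explicit partial order is introduced, and ${\bf P_2}$ enters only implicitly through Lemma~\ref{if}.

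Your approach is more structural: you partition $D_M$ into cycle classes, use your auxiliary ``no closed walk through a foreign letter'' fact (a clean consequence of ${\bf P_2}$) to make reachability among classes a partial order, and then show that any minimal class sits inside $D_M^0$. This buys you more than the paper's argument: your minimal class $C$ automatically lies in $D_M^{00}$ as well, since an edge from $C$ into $D_M\setminus C$ would immediately violate minimality. In other words, your machinery proves Lemma~\ref{ijf} and Lemma~\ref{iijf} in one stroke, whereas the paper handles them by two separate (though similar) contradiction arguments. The trade-off is that the paper's proof of Lemma~\ref{ijf} alone is shorter and avoids setting up the order; your version front-loads the work but clarifies the underlying picture.
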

\begin{proof}
Suppose $D_{M}^0=\emptyset$. Then, given $i_1\in D_M$, there is a $j_1\in D_{M,i_1}$ and an $l_1\in A\setminus D_M$ such that $A_M^*(j_1,l_1)\neq\emptyset$. By Lemma \ref{if}, $A_M^*(l_1,i_2)\neq\emptyset$ for some  $i_2\in D_M$. Repeating this argument, there is a $j_2\in D_{M,i_2}$ and an $l_2\in A\setminus D_M$ such that $A_M^*(j_2,l_2)\neq\emptyset$ and $A_M^*(l_2,i_3)\neq\emptyset$ for some $i_3\in D_M$. Continuing  this argument, we see that $A_M^{\mathbb{N}}$ has a word of the form
$$i_1\cdots j_1\cdots l_1\cdots i_2\cdots j_2\cdots l_2\cdots i_3\cdots j_3\cdots l_3\cdots,$$
where $i_k\in D_M$, $j_k\in D_{M, i_k}$ and $l_k\in A\setminus D_M$ for all $k\geq 1$. By the pigeonhole principle, there are $1\leq s<t$ such that $l_s=l_t$, which implies that $A_M^{\mathbb{N}}$ has a periodic word of head $l_s$, a contradiction.
\end{proof}
Let $$D_M^{00}=\{i\in D_M^0: M_{ju}=0 \mbox{ for all } j\in  D_{M,i} \mbox{ and all } u\in D_M\setminus D_{M,i}\}.$$

\begin{lemma}\label{iijf}
Let $M\in\mathcal{M}_b({\bf P_1, P_2})$. Then $D_M^{00}\neq\emptyset$.
\end{lemma}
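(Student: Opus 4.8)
The plan is to build on the previous lemma (Lemma~\ref{ijf}) in the same spirit, refining a vertex of $D_M^0$ into one that also has no ``horizontal'' escape to a different periodic cycle inside $D_M$. Concretely, I would argue by contradiction: suppose $D_M^{00}=\emptyset$. Take any $i_1\in D_M^0$, which exists by Lemma~\ref{ijf}. Since $i_1\notin D_M^{00}$, there are $j_1\in D_{M,i_1}$ and $u_1\in D_M\setminus D_{M,i_1}$ with $M_{j_1u_1}=1$, i.e. $A_M^*(j_1,u_1)\neq\emptyset$ (a one-step admissible transition suffices). Now $u_1\in D_M$, so $i_2:=u_1$ lies in a periodic cycle $D_{M,i_2}$ \emph{different} from $D_{M,i_1}$. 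First I would check that in fact $i_2\in D_M^0$: property (b) for $i_1\in D_M^0$ says $M_{jl}=0$ for every $j\in D_{M,i_1}$ and every $l\in A\setminus D_M$, and one must verify this does not force $i_2\in D_M^0$ directly — rather, if $i_2\notin D_M^0$, then $A_M^{\mathbb{N}}$ contains a word passing through $D_{M,i_2}$ and then escaping to $A\setminus D_M$ and, by Lemma~\ref{if}(2) together with the pigeonhole argument used in Lemma~\ref{ijf}, eventually returning to a periodic cycle; splicing this with the path $i_1\cdots j_1 u_1=i_2$ produces a word in $A_M^{\mathbb{N}}$ that revisits some letter of $A\setminus D_M$, hence a periodic word of head in $A\setminus D_M$, contradicting Lemma~\ref{if}(2). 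So we may assume $i_2\in D_M^0$ as well.

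Next, iterate. Having produced $i_1,i_2\in D_M^0$ with $D_{M,i_1}\neq D_{M,i_2}$ and an admissible path from $D_{M,i_1}$ into $D_{M,i_2}$, repeat the argument with $i_2$ in place of $i_1$: since $i_2\notin D_M^{00}$ there are $j_2\in D_{M,i_2}$ and $u_2\in D_M\setminus D_{M,i_2}$ with $M_{j_2u_2}=1$; set $i_3:=u_2\in D_M^0$ (by the same splicing argument). Continuing, I obtain an infinite admissible word
$$
i_1\cdots j_1\, i_2\cdots j_2\, i_3\cdots j_3\, i_4\cdots\in A_M^{\mathbb{N}},
$$
in which each $i_k$ lies in a periodic cycle $D_{M,i_k}$, and the cycle changes at each step in the sense that the path genuinely leaves $D_{M,i_k}$ when it moves from $j_k$ to $i_{k+1}$. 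The key combinatorial observation is that this forces the \emph{cycles} $D_{M,i_1},D_{M,i_2},\dots$ to be pairwise distinct: once the path leaves $D_{M,i_k}$ at step $k$, it can never re-enter it, because re-entering $D_{M,i_k}$ and then again running through its periodic word would, concatenated with the portion of $w^{(i_k)}$ between two occurrences of some letter of $D_{M,i_k}$, give two distinct admissible words with the same head and tail in $D_M$, violating condition ${\bf P_2}$ exactly as in Lemma~\ref{if}(1). Since $A$ is finite there are only finitely many distinct periodic cycles, so the sequence $\{D_{M,i_k}\}$ cannot be infinite and pairwise distinct — contradiction. Hence $D_M^{00}\neq\emptyset$.

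I expect the main obstacle to be the ``no re-entry'' claim: carefully formulating why an admissible word that leaves the periodic cycle $D_{M,i_k}$ can never come back to it without contradicting ${\bf P_2}$. The cleanest way is probably to use Lemma~\ref{if}(1) directly — a letter $i\in D_M$ has a \emph{unique} periodic word $w^{(i)\infty}$ of head $i$ in $A_M^{\mathbb{N}}$ — so any admissible bi-infinite excursion $i\to\cdots\to i$ of letters must coincide with a power of $w^{(i)}$; an excursion that leaves $D_{M,i}$ and returns would give a second, different periodic word of head $i$, which is impossible. A secondary technical point is the repeated ``splicing'' step showing each newly produced $i_{k+1}=u_k$ lies in $D_M^0$ (not merely in $D_M$); this is really the same pigeonhole-plus-Lemma~\ref{if}(2) argument already used in the proof of Lemma~\ref{ijf}, applied to the concatenated path, and I would simply invoke it rather than rewrite it. Once these two points are in place, the finiteness of $A$ closes the argument immediately.
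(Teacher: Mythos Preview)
Your overall strategy is sound and yields a valid proof, but one intermediate justification needs repair, and the route differs from the paper's.

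The claim ``$i_2\in D_M^0$'' is true, but your stated reason is not: the spliced word $i_1\cdots j_1\,i_2\cdots l\cdots$ visits $A\setminus D_M$ only once (at $l$), so nothing is ``revisited'' and no periodic word with head in $A\setminus D_M$ is forced. The correct argument is much shorter and is just transitivity plus the definition of $D_M^0$: if $i_2\notin D_M^0$ there exist $j\in D_{M,i_2}$ and $l\in A\setminus D_M$ with $A_M^*(j,l)\neq\emptyset$; concatenating $j_1\to i_2\to\cdots\to j\to\cdots\to l$ gives $A_M^*(j_1,l)\neq\emptyset$ with $j_1\in D_{M,i_1}$, contradicting $i_1\in D_M^0$. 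With this fix your iteration goes through exactly as you describe: each $i_k\in D_M^0$, the no-re-entry claim (via Lemma~\ref{if}(1), just as you outline) makes the cycles $D_{M,i_k}$ pairwise disjoint, and finiteness of $A$ finishes.

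The paper's argument is organized differently. It first partitions $D_M^0$ into its cycle components $D_{M,s_1},\dots,D_{M,s_k}$ and, invoking Lemma~\ref{if}, linearly orders them so that no admissible path runs from an earlier component to a later one. It then shows \emph{directly} that $s_1\in D_M^{00}$: if not, the ordering forces any escape edge out of $D_{M,s_1}$ to land in $D_M\setminus D_M^0$, hence on into $A\setminus D_M$, and iterating this produces an admissible word hitting $A\setminus D_M$ infinitely often; pigeonhole then yields a periodic word with head in $A\setminus D_M$, contradicting Lemma~\ref{if}(2). So the paper exhibits an explicit member of $D_M^{00}$ via a pre-ordering of components, while you run an infinite descent through $D_M^0$; once the $i_2\in D_M^0$ step is fixed, your version is arguably the more streamlined of the two.
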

\begin{proof}  As mentioned, we have $D_{M,i}\subset D_M^0$ for each $i\in D_M^0$. Thus
there exist $s_1,s_2,\cdots, s_k\in D_M^0$ such that
$D_{M,s_1},D_{M,s_2},\cdots, D_{M, s_k}$ are pairwise disjoint with
$$D_M^0=\cup_{t=1}^kD_{M,s_t}.$$ By Lemma \ref{if}, we may assume that $s_1,s_2,\cdots, s_k$ have be arranged such that
\begin{equation}\label{ook}
\mbox{$A_M^*(j,j')=\emptyset$ for all $j\in D_{M,s_t}$ and $j'\in D_{M, s_{t'}}$, if $1\leq t< t'\leq k$.}
\end{equation}
We are going to show $s_1\in D_M^{00}$.

Suppose that $s_1\not\in D_M^{00}$. This assumption together with (\ref{ook}) implies that $M_{j_1u_1}\neq 0$ for some $j_1\in  D_{M,s_1}$ and some $u_1\in D_M\setminus D_M^0$.  Then $A_M^*(s_1,l_1)\neq\emptyset$ for some $l_1\in A\setminus D_M$ by the definition of $D_M^0$, which together with (\ref{ook}) and Lemma \ref{if} implies that $A_M^*(l_1,u_2)\neq\emptyset$ for some $u_2\in D_M\setminus D_{M}^0$. Repeatedly arguing as above, we get an infinite word $w\in A^{\mathbb{N}}_M$, in which there are infinitely many letters from $A\setminus D_M$. This implies by the pigeonhole principle that $A_M^{\mathbb{N}}$ has a periodic word of head $l$ for some $l\in A\setminus D_M$, a contradiction.
\end{proof}

\section{The proof of main results}

Let $b\in\mathbb{N},\, b\geq 2$. Let $A=\{1,2,\cdots, b\}$. Let $M\in\mathcal{M}_b$. For each $i\in A$ let
$$A_M^{\mathbb{N}}(i)=\{w\in A_M^{\mathbb{N}}: w_1=i\}.$$

\begin{lemma}\label{boun}
Let $M\in\mathcal{M}_b({\bf P_1})$. Suppose that $\{\|M^n\|\}_{n=1}^\infty$ is bounded. Then $\|M^n\|\leq 2^{b-1}$ for all $n\in\mathbb{N}$.
\end{lemma}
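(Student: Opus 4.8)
The strategy is to reduce to the structural machinery built up in Section 2 and then to iterate a "peeling off" of a cycle. Since $\{\|M^n\|\}$ is bounded, Lemma~\ref{nP} forces $(M^k)_{ii}\leq 1$ for all $i,k$, i.e. $M\in\mathcal{M}_b({\bf P_1,P_2})$; and by Lemma~\ref{iif}(2) each $i\in D_M$ carries a unique, necessarily periodic, word $w^{(i)}\in A_M^{\mathbb{N}}$. By Lemma~\ref{iijf}, $D_M^{00}\neq\emptyset$, so we may pick $i\in D_M^{00}$ and set $C=D_{M,i}$, the letter-set of the cycle through $i$; by property (b) after~(\ref{ok}) there are no edges from $C$ to $A\setminus D_M$, and by definition of $D_M^{00}$ no edges from $C$ to $D_M\setminus C$ either. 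Hence, after applying a permutation $\sigma$ (legitimate by Lemma~\ref{l1} and~(\ref{im1})), the rows indexed by $C$ have support contained in $C$, and on $C$ the matrix $M$ restricts to a single cycle; that is, after reordering, $M$ is equivalent with a block matrix
\begin{equation}\label{blockform}
M\sim\left(\begin{array}{cc} U & {\bf 0} \\ X & M' \end{array}\right),
\end{equation}
where $U$ is the $s\times s$ permutation matrix of a single $s$-cycle (so $U\in\mathcal{U}_s$), $s=\sharp C$, $X$ is a $\{0,1\}$-matrix of size $(b-s)\times s$, and $M'\in\mathcal{M}_{b-s}$ is the submatrix on $A\setminus C$.

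The next step is to replace $X$ by ${\bf 1}$ and $M'$ by $L_{b-s}$ without decreasing the norm, reducing to Lemma~\ref{U}(4). First, replacing $X$ by ${\bf 1}$ only increases every entry of every power, hence increases $\|M^n\|$; and the enlarged matrix still has $\rho=1$ and still satisfies ${\bf P_1}$ provided $M'$ is non-nilpotent-free in the right sense, so I must check boundedness is preserved — this is where I will need an argument. Concretely: with $X={\bf 1}$, Lemma~\ref{U}(3) gives the lower-left block of the $n$-th power as ${\bf 1}+M'{\bf 1}+\cdots+(M')^{n-1}{\bf 1}$, whose norm is $s(\|{\bf 1}\|+\|M'{\bf 1}\|+\cdots)$-type sum; boundedness of this forces $\|(M')^n{\bf 1}\|\to 0$, i.e. $(M')^n={\bf 0}$ eventually, i.e. $M'$ is nilpotent, so $D_{M'}=\emptyset$. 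By Lemma~\ref{empty}, $M'$ is then equivalent with a strictly lower triangular matrix, hence with $L_{b-s}$ after possibly padding with zero rows/columns — more precisely $M'$ embeds into $L_{b-s}$ entrywise after a suitable permutation, so replacing $M'$ by $L_{b-s}$ again only increases $\|M^n\|$. Applying Lemma~\ref{U}(4) (with $n$ large, and monotonicity in $n$ from ${\bf P_1}$ to cover small $n$) yields $\|M^n\|\leq s2^{b-s}$.

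Finally I must bound $s2^{b-s}$ by $2^{b-1}$ for $1\leq s\leq b$. For $s=1,2$ one has $s2^{b-s}=2^{b-1}$, and for $s\geq 3$, $s<2^{s-1}$ gives $s2^{b-s}<2^{b-1}$; so in all cases $\|M^n\|\leq 2^{b-1}$, as claimed. I expect the main obstacle to be the middle step: carefully justifying that, after extracting the cycle block $U$ and enlarging $X$ to ${\bf 1}$, the boundedness hypothesis genuinely forces $M'$ nilpotent and that the entrywise domination $M'\preceq$ (a permutation of) $L_{b-s}$ can be arranged simultaneously with the block decomposition~(\ref{blockform}) — i.e. that the two permutations (one putting the cycle in the top-left single-cycle form, one triangularizing $M'$) are compatible. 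This is handled by the observation that the second permutation only moves letters within $A\setminus C$ and hence fixes the block structure; one then invokes Lemma~\ref{l1} twice and composes the permutations, exactly as in the proof of Lemma~\ref{empty}.
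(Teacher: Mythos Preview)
Your argument has a genuine gap at the step where you claim that, after replacing $X$ by ${\bf 1}$, ``boundedness of this forces $\|(M')^n{\bf 1}\|\to 0$, i.e.\ $(M')^n={\bf 0}$ eventually''. The boundedness you have is for the \emph{original} matrix $M$, not for the enlarged matrix $\tilde M=\left(\begin{smallmatrix}U&{\bf 0}\\{\bf 1}&M'\end{smallmatrix}\right)$; the inequality goes the wrong way ($\|M^n\|\le\|\tilde M^n\|$), so boundedness of $\|M^n\|$ says nothing about $\|\tilde M^n\|$. Concretely, take $M=I_3$. Then $D_M=\{1,2,3\}$, every $i$ lies in $D_M^{00}$, and with $C=D_{M,1}=\{1\}$ your block form has $U=(1)$ and $M'=I_2$, which is \emph{not} nilpotent. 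The enlarged matrix has $\|\tilde M^n\|=2n+3$, unbounded, so you cannot reach Lemma~\ref{U}(4) along this route. The root cause is that you peel off a \emph{single} cycle $D_{M,i}$, leaving other cycles of $D_M$ inside $M'$.

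The paper avoids this by taking $s=\sharp D_M$ and putting \emph{all} of $D_M$ into the top block at once. You already invoked Lemma~\ref{iif}: each $i\in D_M$ has a unique word in $A_M^{\mathbb N}(i)$. This forces row $i$ to have exactly one $1$, located in a column $\pi(i)\in D_M$ (the second letter of that periodic word), and $\pi$ is a permutation of $D_M$ since every $i\in D_M$ lies on its own cycle. After relabeling $D_M=\{1,\dots,s\}$ the top-left block is the permutation matrix of $\pi$ (hence in $\mathcal U_s$) and the top-right block is ${\bf 0}$. The bottom-right block $B$ acts on $A\setminus D_M$, so $D_B=\emptyset$ by the very definition of $D_M$, and Lemma~\ref{empty} makes $B$ equivalent (by a permutation of $A\setminus D_M$ only, so the block structure is preserved) to a strictly lower triangular matrix, hence entrywise $\le L_{b-s}$. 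Now the passage to $\left(\begin{smallmatrix}U&{\bf 0}\\{\bf 1}&L_{b-s}\end{smallmatrix}\right)$ and Lemma~\ref{U}(4) go through exactly as in your final paragraph, yielding $\|M^n\|\le s2^{b-s}\le 2^{b-1}$. In short: drop Lemma~\ref{iijf} here and use the full strength of Lemma~\ref{iif} to absorb all of $D_M$ into $U$; then $B$ is automatically nilpotent and no circularity arises.
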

\begin{proof} The conditions here imply $M\in\mathcal{M}_b({\bf P_1,P_2})$ by Lemma \ref{nP}. Therefore $\sharp A_M^{\mathbb{N}}(i)=1$ for each $i\in D_M$ by Lemma \ref{iif}. It then follows from Lemma \ref{l1} and Lemma \ref{empty} that $M$ is equivalent with a matrix $N$ of the form
$$
\left(
  \begin{array}{cc}
    U & {\bf 0}  \\
    V & B \\
  \end{array}
\right),
$$
where $U\in\mathcal{U}_s$; $s=\sharp D_M\geq 1$; $B$ is a strictly lower triangular matrix. Now using Lemma \ref{U}, we get
\begin{equation}\label{iff}
\|M^n\|=\|N^n\|\leq\|\left(
  \begin{array}{cc}
    U & 0  \\
    {\bf 1} & L_{b-s} \\
  \end{array}
\right)^{n}\|\leq s2^{b-s}\leq 2^{b-1}
\end{equation}
for all $n\in \mathbb{N}$.
This completes the proof.
\end{proof}

\begin{lemma}\label{unb}
Let $M\in\mathcal{M}_b({\bf P_1,P_2})$. Then $\|M^n\|\leq C_{n+b}^{n+1}$ for all $n\in \mathbb{N}$.
\end{lemma}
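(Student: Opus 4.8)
The plan is to prove, by induction on $b$, the slightly more general statement: for every $b\ge 1$ and every $b\times b$ $\{0,1\}$-matrix $M$ (possibly zero or nilpotent) satisfying ${\bf P_2}$ we have $\|M^n\|\le C_{n+b}^{n+1}$ for all $n\in\mathbb N$; this contains the lemma, on taking $b\ge2$ and $M\in\mathcal M_b({\bf P_1,P_2})$. Enlarging the hypothesis in this way is unavoidable, because the inductive step removes a block and the surviving block still satisfies ${\bf P_2}$ but in general no longer satisfies ${\bf P_1}$. The case $b=1$ is immediate. In the inductive step, if $M$ is zero or nilpotent then $D_M=\emptyset$, so by Lemma \ref{empty} $M$ is equivalent with a strictly lower triangular matrix $L'$; since $L'\le T_b$ entrywise, $(L')^{n}\le T_b^{n}$ entrywise, hence $\|M^n\|=\|(L')^{n}\|\le\|T_b^{n}\|=C_{n+b}^{n+1}$ by Lemma \ref{id35}. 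From now on $M$ is non-nilpotent.

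For non-nilpotent $M$ I would use the structure supplied by Lemmas \ref{if}--\ref{iijf} (whose proofs use only non-nilpotence and ${\bf P_2}$) to pick some $i\in D_M^{00}$ and set $S=D_{M,i}$, $s=\sharp S\ge1$. By the properties (a), (b) recorded after \eqref{ok} together with the definition of $D_M^{00}$, no edge of $M$ leaves $S$, so $S$ is invariant; by the first part of Lemma \ref{if} there is a closed walk of $M$ of length $s$ meeting every state of $S$ exactly once, while ${\bf P_2}$ excludes any further edge inside $S$, since such a chord would create two closed walks of different lengths at some $j\in S$ and hence $(M^m)_{jj}\ge2$ for some $m$. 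Consequently $M|_S$ is the permutation matrix of a single $s$-cycle and belongs to $\mathcal U_s$. If $s=b$, then $M\sim J_b$, so $\|M^n\|=b\le C_{n+b}^{n+1}$. If $s<b$, put $b'=b-s$ and list the states of $A\setminus S$ first; then by Lemma \ref{l1} and \eqref{im1},
$$\|M^n\|=\left\|\left(\begin{array}{cc}B&W\\{\bf 0}&C\end{array}\right)^{n}\right\|,\qquad C=M|_S\in\mathcal U_s,\quad B=M|_{A\setminus S},$$
where $B$ is $b'\times b'$ with $b'<b$ and still satisfies ${\bf P_2}$.

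Expanding this block-triangular power gives $\|M^n\|=\|B^n\|+\|C^n\|+\|X_n\|$ with $X_n=\sum_{j=0}^{n-1}B^jWC^{n-1-j}$. Now $\|C^n\|=s$, and $\|B^n\|\le C_{n+b'}^{n+1}$ by the inductive hypothesis. For the cross term: the norm is additive on nonnegative summands, so $\|X_n\|=\sum_{j=0}^{n-1}\|B^jWC^{n-1-j}\|$; each summand equals $\|B^jW\|$ since $C^{n-1-j}\in\mathcal U_s$ (second part of Lemma \ref{U}); and $\|B^jW\|=\sum_{k,m}(B^j)_{km}\,\|r_m(W)\|\le s\,\|B^j\|$ because every row of $W$ has at most $s$ ones. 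Hence $\|X_n\|\le s\sum_{j=0}^{n-1}\|B^j\|$, and since $\|B^0\|=\|I_{b'}\|=b'$ while $\|B^j\|\le C_{j+b'}^{j+1}$ for $j\ge1$ by induction, the identity \eqref{id1} yields $\sum_{j=0}^{n-1}\|B^j\|\le b'+\big(C_{n+b'}^{n}-C_{1+b'}^{1}\big)=C_{n+b'}^{n}-1$. Therefore
$$\|M^n\|\le C_{n+b'}^{n+1}+s+s\big(C_{n+b'}^{n}-1\big)=C_{n+b-s}^{n+1}+s\,C_{n+b-s}^{n}\le C_{n+b}^{n+1}$$
by inequality \eqref{cd1} with $k=s$, which completes the induction.

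Once the block reduction is in place, the genuinely delicate step is the cross-term estimate. It is essential that $C=M|_S$ be orthogonal, so that the right factors $C^{n-1-j}$ may be deleted without changing norms; and it is equally essential that $\sum_{j=0}^{n-1}\|B^j\|$ be bounded by \emph{exactly} $C_{n+b'}^{n}-1$, not merely by a constant multiple of $C_{n+b'}^{n}$, because the only reserve left in \eqref{cd1} for $k=s$ is the summand $s\,C_{n+b-s}^{n}$; this tightness is consistent with the fact that equality in \eqref{cd1} forces $k=1$. The other point to keep in mind is the one noted at the start: the reduction leaves the class $\mathcal M_b({\bf P_1,P_2})$, which is why the induction must be carried out over all ${\bf P_2}$-matrices.
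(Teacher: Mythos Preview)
There is a real gap at the step where you invoke Lemmas \ref{ijf}--\ref{iijf} for non-nilpotent matrices satisfying only ${\bf P_2}$. Those lemmas, as stated and proved, need ${\bf P_1}$: the line ``by Lemma \ref{if}, $A_M^*(l_1,i_2)\ne\emptyset$ for some $i_2\in D_M$'' in the proof of Lemma \ref{ijf} uses that from a state with nonzero column one can launch an infinite $M$-walk, which is exactly condition ${\bf P_1}$. And the conclusions themselves fail without it: take $M=\bigl(\begin{smallmatrix}1&1\\0&0\end{smallmatrix}\bigr)$ (or pad with zero rows and columns). This $M$ is non-nilpotent and satisfies ${\bf P_2}$, yet $D_M=\{1\}$ and the edge $1\to 2$ leaves $\{1\}$, so $D_M^0=D_M^{00}=\emptyset$. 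No invariant cycle $S$ exists, and your block form $\bigl(\begin{smallmatrix}B&W\\{\bf 0}&C\end{smallmatrix}\bigr)$ with $C\in\mathcal U_s$ is simply unavailable for such $M$. Thus the inductive step breaks precisely at the place where you enlarged the hypothesis to cure the loss of ${\bf P_1}$; contrary to your remark, that enlargement is not ``unavoidable,'' and in fact it does not work.

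The paper meets the same difficulty not by enlarging the class but by repairing ${\bf P_1}$ inside the induction: it keeps the induction on $\mathcal M_b({\bf P_1,P_2})$, splits off the invariant cycle $U$ (which exists because ${\bf P_1}$ holds), and then replaces every zero row of the complementary block $B$ by the corresponding unit row to form $\widehat B\in\mathcal M_{b-k}({\bf P_1,P_2})$, using $\|B^n\|\le\|\widehat B^n\|$ before applying the inductive hypothesis to $\widehat B$. Your argument becomes correct with the same device inserted once at the top of the non-nilpotent case (replace each zero row of $M$ by the matching $e_i$; this enlarges all powers entrywise and preserves ${\bf P_2}$, after which every row is nonzero, ${\bf P_1}$ holds, and Lemmas \ref{ijf}--\ref{iijf} apply). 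With that one-line fix, your cross-term estimate and the final appeal to \eqref{id1}--\eqref{cd1} agree with the paper's computation line for line.
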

\begin{proof}
The lemma is true for $b=2$ by a simple computation. Suppose it is true for all $2\leq l<b$, where $b>2$ is an arbitrarily given integer. We are going to show that it is true for $b$.

Let $M\in\mathcal{M}_b({\bf P_1,P_2})$. By passing to an equivalent matrix by Lemma \ref{l1}, Lemma \ref{if} and Lemma \ref{iijf}, we may assume without loss of generality that $1\in D_M^{00}$ and that $(12\cdots k)^\infty$ is the unique periodic word of $A_M^{\mathbb{N}}(1)$ for some $k\leq b$.
Then by Lemma \ref{ijf} and Lemma \ref{iijf} we see that $M$ is of the form
$$\left(
  \begin{array}{cc}
    U & \textbf{0} \\
    V & B \\
  \end{array}
\right),
$$
where $$U=\left\{
         \begin{array}{ll}
           I_1 & \hbox{if $k=1$} \\ \\
           J_k & \hbox{otherwise}.
          \end{array}
       \right.$$
Let
$$
N=\left(
  \begin{array}{cc}
    U & {\bf 0} \\
    {\bf 1} & B \\
  \end{array}
\right).
$$
Then we easily get
\begin{equation}\label{mnbj}
\sharp A_M^{n+1}\leq \sharp A_N^{n+1}\mbox{ or equivalently }\|M^n\|\leq\|N^n\| \mbox{ for all $n\in\mathbb{N}$.}
\end{equation}
Clearly, $B$ satisfies the condition ${\bf P_2}$. But it is possible that $B$ does not satisfies the condition ${\bf P_1}$. Let $\widehat{B}$ be defined by
$$
r_i(\widehat{B})=\left\{
         \begin{array}{ll}
           e_i & \hbox{if $r_i(B)=0$} \\ \\
           r_i(B) & \hbox{otherwise}
          \end{array}
       \right.
$$
for all $i\in \{1,2,\cdots, b-k\}$, where $e_i$ denotes the $(b-k)$-dimensional unit row vector whose $i$-th component is $1$. Then
$\widehat{B}\in\mathcal{M}_{b-k}({\bf P_1, P_2})$ and
\begin{equation}\label{kk}
\|B^n\|\leq \|\widehat{B}^n\|
\end{equation}
for all $n\in\mathbb{N}$.
By the inductive assumption, $\|\widehat{B}^n\|\leq C_{n+b-k}^{n+1}$ for all $n\in\mathbb{N}$.
It then follows  from Lemma \ref{U}, (\ref{id1}) and (\ref{cd1}) that for every $n\in\mathbb{N}$
\begin{eqnarray*}
&&\|M^n\|\leq\|N^n\|\\&=&\|U^n\|+\|{\bf 1}\|+\|B{\bf 1}\|+\cdots+\|B^{n-1}{\bf 1}\|+\|B^n\| \\
   &\leq& k(1+b-k +\|\widehat{B}\|+\cdots+\|\widehat{B}^{n-1}\|)+\|\widehat{B}^n\|\\
   &\leq& k(C_{1+b-k}^1+C_{1+b-k}^2+C_{2+b-k}^3+\cdots+C_{n-1+b-k}^n)+C_{n+b-k}^{n+1}\\
   &\leq& kC_{n+b-k}^n+C_{n+b-k}^{n+1}\leq C_{n+b}^{n+1}.
\end{eqnarray*}
This completes the proof.
\end{proof}

Now we are ready to prove Theorems \ref{t1}, \ref{t3} and \ref{t2}.

\medskip

\noindent{\it Proof of Theorem \ref{t1}.} Let $b\in \mathbb{N},\, b\geq 2$. Let $M\in\mathcal{M}_b({\bf P_1})$. We are required to prove the trichotomy:

1) $\{\|M^n\|\}_{n=1}^\infty$ tends to $\infty$ exponentially.

2)  $n+2\leq\|M^n\|\leq C_{n+b}^{n+1}$ for all $n\in\mathbb{N}$.

3) $\|M^n\|\leq 2^{b-1}$ for all $n\in\mathbb{N}$.

\noindent Since Lemmas \ref{nP}, \ref{boun} and \ref{unb} have been proved, it suffices to show that if $M\in\mathcal{M}_b({\bf P_1, P_2})$ and  $\{\|M^n\|\}_{n=1}^\infty$ is unbounded, then $\|M^n\|=\sharp A_M^{n+1}\geq n+2$ for all $n\in\mathbb{N}$. In fact, by Lemmas \ref{if} and \ref{iif}, there is an $i\in D_M$ such that $A_M^{\mathbb{N}}(i)$ has at least two words, by which we easily show $\sharp A_M^{n+1}\geq n+2$ for all $n\in\mathbb{N}$ by induction. This completes the proof.

\medskip

\noindent{\it Proof of Theorem \ref{t3}.} Let $b\in \mathbb{N},\, b\geq 2$. Let $M\in\mathcal{M}_b({\bf P_1})$. We are required to prove that the following two statements are equivalent.

1) $\sup_{n\in\mathbb{N}}\|M^n\|=2^{b-1}$.

2) $M$ is equivalent with one of the following three matrices
$$
\left(
      \begin{array}{cc}
        1 & \bf{0} \\
        \bf{1} & L_{b-1}  \\
      \end{array}
    \right),\
\left(
      \begin{array}{cc}
      I_2 & \bf{0} \\
      \bf{1} & L_{b-2} \\
    \end{array}
    \right),\
\left(
    \begin{array}{cc}
      J_2 & \bf{0} \\
      \bf{1} & L_{b-2} \\
    \end{array}
  \right).
$$

2) $\Rightarrow$ 1). If the statement 2) is supposed, we get $\sup_{n\in\mathbb{N}}\|M^n\|=2^{b-1}$ by (\ref{im1}) and Lemma \ref{U} immediately.

1) $\Rightarrow$ 2). Suppose $\sup_{n\in\mathbb{N}}\|M^n\|=2^{b-1}$. Then we have $M\in\mathcal{M}_b({\bf P_1, P_2})$ by Lemma \ref{nP}. Next, by the proof of Lemma \ref{boun}, $M$ is equivalent with a matrix of the form $$\left(
  \begin{array}{cc}
    U & {\bf 0}  \\
    V & B \\
  \end{array}
\right),$$
where $U\in\mathcal{U}_s$ for some $1\leq s\leq b$, $B$ is a strictly lower triangular matrix, and  $$
\|\left(
  \begin{array}{cc}
    U & {\bf 0}  \\
    V & B \\
  \end{array}
\right)^n\|=\|\left(
  \begin{array}{cc}
    U & 0  \\
    {\bf 1} & L_{b-s} \\
  \end{array}
\right)^{n}\|= s2^{b-s}= 2^{b-1}
$$
for sufficiently large $n\in \mathbb{N}$. Thus $V={\bf 1}$, $B=L_{b-s}$, and $s=1$ or $2$. Then one has $U=I_1$ if $s=1$, and $U=I_2$ or $J_2$ if $s=2$. This completes the proof.

\medskip

\noindent{\it Proof of Theorem \ref{t2}.}
Let $b\in \mathbb{N},\, b\geq 2$. Let $M\in\mathcal{M}_b({\bf P_1})$. We are required to prove that
the following two statements are equivalent.

1) $\|M^n\|=C_{n+b}^{n+1}$ for all $n\in\mathbb{N}$.

2) $M\sim T_b$.

2) $\Rightarrow$ 1). If $M\sim T_b$ is supposed, we get $\|M^n\|=C_{n+b}^{n+1}$ for all $n\in\mathbb{N}$ by (\ref{im1}) and Lemma \ref{id35} immediately.

1) $\Rightarrow$ 2).  Suppose that $\|M^n\|=C_{n+b}^{n+1}$ for all $n\in\mathbb{N}$. Then $M\in\mathcal{M}_b({\bf P_1, P_2})$ by Lemma \ref{nP}. Next, by the proof of Lemma \ref{unb}, $M$ is equivalent with a matrix of the form $$
\left(
  \begin{array}{cc}
    U & {\bf 0} \\
    V & B \\
  \end{array}
\right),
$$
where $U\in\mathcal{U}_k$ for some $1\leq k\leq b$ and
\begin{eqnarray*}
  && \|\left(
  \begin{array}{cc}
    U & {\bf 0} \\
    V & B \\
  \end{array}
\right)^n\|=\|\left(
  \begin{array}{cc}
    U & {\bf 0} \\
    {\bf 1} & B \\
  \end{array}
\right)^n\| \\
 &=& kC_{n+b-k}^n+\|\widehat{B}^n\|=kC_{n+b-k}^n+C_{n+b-k}^{n+1}=C_{n+b}^{n+1}
\end{eqnarray*}
for all $n\in\mathbb{N}$. Thus $k=1$, $V={\bf 1}$, $B=\widehat{B}\in\mathcal{M}_{b-1}({\bf P_1, P_2})$, and $\|B^n\|=C_{n+b-1}^{n+1}$ for all $n\in\mathbb{N}$. This proves that $M$ is equivalent with a matrix of the form
$$\left(
  \begin{array}{cc}
    1 & {\bf 0} \\
    {\bf 1} & B \\
  \end{array}
\right).$$
 If $b>2$,  doing the same thing as above for the matrix $B$, we see that $M$ is equivalent with a matrix of the form
$$\left(
  \begin{array}{ccc}
    1 & 0 & {\bf 0} \\
    1 & 1 & {\bf 0} \\
    {\bf 1} & {\bf 1} & X \\
  \end{array}
\right),
$$
where $X$ satisfies $X\in\mathcal{M}_{b-2}({\bf P_1, P_2})$ and $\|X^n\|=C_{n+b-2}^{n+1}$ for all $n\in\mathbb{N}$. Now we get $M\sim T_b$ by induction.


\begin{thebibliography}{}
\bibitem{BP} C. J. Bishop and Y. Peres, Fractal sets in Probability and Analysis, Cambridge University Press, 2017.

\bibitem{BH} R. A. Brualdi and S. G. Hwang, On the Spectral Radius of $(0,1)$-Matrices with $1$'s in Prescribed Positions,
SIAM Journal on Matrix Analysis and Applications, 1996, Vol.17, No.3: pp. 489-508.

\bibitem{S02} D. Serre, Matrics Theory and Applications, Springer, 2002.

\bibitem{W} P. Walters, An Introduction to Ergodic Theory, Springer, New York, 2000.




\end{thebibliography}
\end{document}